\def\C{\mathbb C}
\def\R{\mathbb R}
\def\N{\mathbb N}
\def\Z{\mathbb Z}
\newtheorem{thm}{Theorem}[section]
\newtheorem{lem}{Lemma}[section]
\newtheorem{definition}{Definition}[section]
\newtheorem{prop}{Proposition}[section]
\begin{document}
\sffamily
\title{Trajectories escaping to infinity in finite time}
\author{J.K. Langley}
\address{School of Mathematical Sciences 
\\University of Nottingham 
\\NG7 2RD, UK}
\email{james.langley@nottingham.ac.uk}

\begin{abstract}
If the function $f$ is transcendental and  meromorphic in the plane, and either $f$ has finitely many poles or
its inverse function has a logarithmic singularity over $\infty$,
then the equation $\dot z = f(z)$ has infinitely many trajectories  tending to infinity in finite increasing time. 
MSC 2010: 30D30. 
\end{abstract}
\maketitle

\section{Introduction}

This paper concerns the differential equation 
\begin{equation}
 \label{flow1}
\dot z = \frac{dz}{dt} = f(z),
\end{equation}
in which the function $f$ is meromorphic in a plane domain $D$: see, for example,
\cite{brickman,2,garijo,hajek1,hajek2,hajek3, kingneedham} for fundamental results concerning such flows.  
A trajectory for (\ref{flow1})
is a path $z(t)$ in $D$ with  $z'(t) = f(z(t)) \in \C$ for $t$ in some maximal interval $(\alpha, \beta)  \subseteq \R$.
The present paper is motivated by a result  from \cite{kingneedham}
involving
trajectories 
which tend to infinity in finite increasing 
time, that is, which satisfy $\beta \in \R$ and $\lim_{t \to \beta - } z(t) = \infty$.
King and Needham \cite[Theorem 5]{kingneedham} showed that if  $f$ has a pole at 
infinity of order at least $2$  then such trajectories always exist for (\ref{flow1})
(see Section~\ref{prelim}).
It seems reasonable to ask whether trajectories of this type must exist if  $f$ is  transcendental and meromorphic in the plane, 
and the following will be proved in Section~\ref{nonzero}.

\begin{thm}
 \label{thm0}
Let the function $f$ be  transcendental and meromorphic in the plane, with finitely many poles. 
Then (\ref{flow1}) has infinitely many pairwise disjoint trajectories each  tending to
infinity in finite increasing time. 
\end{thm}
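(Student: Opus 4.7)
The plan is to locate a logarithmic tract $U$ of $f$ over $\infty$ and to straighten out the flow in it by uniformisation. Since $f$ is transcendental meromorphic with finitely many poles, for $R$ sufficiently large the set $\{z:|f(z)|>R\}$ has an unbounded component containing no pole of $f$; by standard results one may replace this component by a simply connected subdomain $U$ on which $f|_U:U\to\{|w|>R\}$ is a universal covering map (a logarithmic tract over $\infty$). Uniformisation then provides a biholomorphism $\psi:U\to H_R:=\{\zeta:\mathrm{Re}\,\zeta>\log R\}$ with $f=\exp\circ\psi$ on $U$.

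Trajectories of $\dot z=f(z)$ in $U$ are the level sets of $\mathrm{Im}\,G$, where $G(z)=\int dz/f(z)$ is a single-valued primitive, traversed in the direction of increasing $\mathrm{Re}\,G$. In the $\zeta$-coordinate this reads
\[
H(\zeta):=G\bigl(\psi^{-1}(\zeta)\bigr)=\int(\psi^{-1})'(\zeta)\,e^{-\zeta}\,d\zeta,
\]
and the technical heart of the argument is to show that $H(\zeta)$ has a finite limit $c$ as $\mathrm{Re}\,\zeta\to+\infty$, with a quantitative estimate of the form $H(\zeta)-c=O\bigl(e^{-\mathrm{Re}\,\zeta}\,p(\mathrm{Re}\,\zeta)\bigr)$ for some polynomial $p$. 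This I would establish by combining the Koebe distortion theorem applied to the univalent map $\psi^{-1}:H_R\to U\subset\C$, which yields at most polynomial growth of $|(\psi^{-1})'(\zeta)|$ in $\mathrm{Re}\,\zeta$, with the exponential decay of $e^{-\zeta}$; these together force absolute convergence of the integral along every path with $\mathrm{Re}\,\zeta\to+\infty$, and the limit is path-independent by Cauchy's theorem.

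With $H$ behaving asymptotically like $c-(\psi^{-1})'(\zeta)\,e^{-\zeta}$ on $\{\mathrm{Re}\,\zeta>M\}$ for $M$ large, the restriction of $H$ to this half-plane is an infinite-sheeted covering onto a punctured neighbourhood of $c$. The level set $\{\mathrm{Im}\,H=\mathrm{Im}\,c\}$ therefore splits into infinitely many disjoint smooth arcs $\Gamma_k\subset H_R$ going to $\mathrm{Re}\,\zeta=+\infty$, one per sheet, and under $\psi^{-1}$ each $\Gamma_k$ becomes a trajectory of $\dot z=f(z)$ in $U$ reaching $\infty$ in finite time $\mathrm{Re}\,c-\mathrm{Re}\,H(\zeta_0)$; distinct $\Gamma_k$ yield pairwise disjoint trajectories. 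The main obstacle is the absolute convergence of the time integral: although $e^{-\zeta}$ provides a strong exponential weight, one needs genuine control of the conformal derivative $(\psi^{-1})'$ to deduce the required decay, and it is here that the finiteness of the pole set enters, through the regularity of the tract boundary that it supplies.
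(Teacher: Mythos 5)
Your argument from the uniformisation $\psi$ onwards is essentially the paper's proof of Theorem \ref{thm1}: the Bieberbach/Koebe distortion bound on $(\psi^{-1})'$, the absolute convergence of $\int e^{-\zeta}(\psi^{-1})'(\zeta)\,d\zeta$ and path-independence by Cauchy's theorem, and the level arcs of ${\rm Im}\, H$ giving infinitely many disjoint finite-time trajectories. The gap is the very first step: the claim that a transcendental meromorphic function with finitely many poles possesses a logarithmic tract over $\infty$. This is false in general. An unbounded, pole-free component $U$ of $\{z:|f(z)|>R\}$ does exist for large $R$, and for entire $f$ it even carries a \emph{direct} singularity of $f^{-1}$ over $\infty$; but for $f\colon U\to\{|w|>R\}$ to be a universal covering, $U$ must contain no critical points of $f$ (and $f|_U$ must omit finite asymptotic values in the annulus), and no choice of $R$ or passage to a subdomain can arrange this when the critical values of $f$ are unbounded --- for example $f(z)=z\cos z$, whose real critical points $z_n\approx n\pi$ satisfy $|f(z_n)|\to\infty$. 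There is no ``standard result'' upgrading a direct singularity to a logarithmic one. The paper itself signals this: Theorems \ref{thm0} and \ref{thm2} are stated separately precisely because neither hypothesis implies the other, and the discussion at the end of Section \ref{nonzero} explains that even the intermediate hypothesis of a direct singularity over $\infty$ is not known to suffice.

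The paper's actual route to Theorem \ref{thm0} avoids tracts altogether. Writing $f=B/C$ with $B$ transcendental entire and $C$ a polynomial, the Wiman--Valiron theory supplies maximum-modulus points $z_r$ with $f(z)\sim (z/z_r)^{N(r)}f(z_r)$ on the small neighbourhoods $D(z_r,8)$, where the central index $N(r)\to\infty$ outside a set of finite logarithmic measure. Proposition \ref{prop1} then extracts, inside each such neighbourhood, at least $N(r)^{1/4}$ pairwise disjoint trajectories whose maximal interval of definition is bounded above by $P_r$, so that by Lemma \ref{limptlemmero} each tends in finite increasing time either to $\infty$ or to a pole; since each of the finitely many poles attracts only finitely many trajectories, infinitely many must escape to $\infty$. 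To salvage your approach you would need a substitute for the missing covering structure on $U$ --- which is exactly the role the local power-like behaviour $f(z)\sim (z/z_r)^{N(r)}f(z_r)$ plays in the paper.
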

The proof of Theorem \ref{thm0} is based on the Wiman-Valiron theory \cite{Hay5}, which shows that if $f$ is as in the hypotheses 
then there exist small neighbourhoods on which $f(z)$ behaves like a constant multiple of a large power of $z$.
In the simple example $\dot z = - \exp( -z)$, all trajectories satisfy
$\exp(z(t)) = \exp(z(0)) -t$ and tend to infinity as $t$ increases, taking finite time to do so
if and only if $\exp( z(0))$ is real and positive.  

For meromorphic functions with infinitely many poles
the situation is in general different. Let $g$ be a transcendental entire function of order of growth $\rho(g) < 1/2$ and 
let $f = -ig/g'$ in (\ref{flow1}). 
Then each trajectory has $i \log g(z(t)) = t + C $, and $\log |g(z(t))| = {\rm Im} \, C$, 
with $C$ constant. Because $\rho(g)< 1/2$, the classical $\cos \pi \rho$ theorem \cite{Hay7}
implies that 
$\min \{ |g(z)| : |z| = r \} $ is unbounded as $r \to \infty$, and so
all trajectories are bounded.
In this example $\infty$ is an asymptotic value of $f$, since   estimates for logarithmic
derivatives from \cite{Gun2} imply that $g'(z)/g(z)$ tends to $0$ as $z$ tends to infinity outside a small exceptional set. 
However, 
Theorem~\ref{thm2} below will show that infinitely many disjoint
trajectories tending to infinity in finite increasing time must exist if $f$ satisfies
the stronger condition that the inverse function has a logarithmic singularity over $\infty$, which is defined as follows \cite{BE,Nev}.

Let $f$ be any transcendental meromorphic function in the plane, let $M$ be real and positive, and let $U$ be a component of the set
$\{ z \in \C : |f(z)| > M \}$ with the following property: for some $z_0 \in U$ with $w_0 = f(z_0) \in \C$,
a branch of the inverse function 
$z = f^{-1}(w)$ is defined near $w_0$,  mapping $w_0$  to $z_0$, and admits 
unrestricted analytic continuation in the annulus
$M < |w| < \infty$. If $v_0$ is chosen so that $e^{v_0} = w_0$ then a function
$\phi(v) =  f^{-1}( e^v) $ may be defined on a neighbourhood of $v_0$ and 
extends by the monodromy theorem to an analytic function on the half-plane $H$ given by ${\rm  Re } \, v > \log M$.
Then, by a well known classification theorem \cite[p.287]{Nev}, 
there are two possibilities. First, if $\phi$ is not univalent on $H$ then $\phi$ has 
period
$m 2 \pi i$ for some minimal positive integer $m$ and $U$ contains precisely one pole $z_1$ of $f$ of multiplicity $m$, while $z \to z_1$ as
$f(z) \to \infty $ with $z \in U$. 
On the other hand, if $\phi$ is univalent on $H$, then $U$ contains no poles of $f$, but $U$ does contain a path tending to infinity
on which $f(z)$ tends to infinity, and the inverse function $\log f$ of $\phi$
maps $U$ univalently onto $H$. In this second case 
the inverse function of $f$ is said to have a logarithmic 
singularity over $\infty$, 
and $U$
is called a neighbourhood of the singularity~\cite{BE}.

\begin{thm}
 \label{thm2}
Let the function $f$ be transcendental and meromorphic in the plane such that its inverse function $f^{-1}$ has a logarithmic 
singularity over $\infty$.  Then for each neighbourhood $U$ of the singularity there exist
infinitely many  pairwise disjoint trajectories of the flow (\ref{flow1}), on each of which 
$z(t)$ tends to infinity in finite increasing time with $z(t) \in U$. 
\end{thm}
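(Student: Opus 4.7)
The plan is to lift (\ref{flow1}) to the half-plane $H = \{v : \mathrm{Re}\,v > \log M\}$ via the conformal bijection $\phi = \log f : U \to H$ afforded by the logarithmic singularity, with inverse $\psi = \phi^{-1} : H \to U$. Setting $v(t) = \phi(z(t))$, the chain rule gives $\psi'(v)\,\dot v = f(\psi(v)) = e^v$, equivalently $\tfrac{d}{dt} G(v(t)) = 1$, where $G(v) := \int_{v_0}^{v} \psi'(\zeta) e^{-\zeta}\,d\zeta$ is the single-valued primitive of $\psi'(v)e^{-v}$ on the simply-connected domain $H$. Trajectories of the flow in $U$ therefore correspond, via $\psi$, to level curves of $\mathrm{Im}\,G$ in $H$ traversed so that $\mathrm{Re}\,G$ increases at unit speed; such a trajectory escapes to infinity inside $U$ in finite increasing time exactly when the underlying level curve reaches infinity in $H$ while $\mathrm{Re}\,G$ remains bounded above.

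The key analytic input is the Koebe distortion theorem applied to the univalent map $\psi$ on $H$: transferring a base-point to the unit disc by a M\"obius transformation, the standard distortion bounds translate into polynomial control of the form $|\psi(v)| + |\psi'(v)| = O((\mathrm{Re}\,v)^{N})$ on every horizontal strip of bounded height. Combined with the factor $e^{-v}$ and direct estimation of the relevant line integrals, this yields (i) an asymptotic value $W_0 := \lim_{x \to \infty} G(x + iy)$ that is independent of $y \in \R$, and (ii) the quasi-periodicity $G(v + 2\pi i) - G(v) \to 0$ as $\mathrm{Re}\,v \to \infty$.

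To produce infinitely many disjoint trajectories I would partition a right half-subplane of $H$ into the horizontal strips $S_k := \{v \in H : |\mathrm{Im}\,v - 2\pi k| < \pi\}$, $k \in \Z$. Since $G'(v) = \psi'(v) e^{-v}$ is nowhere zero, $G$ is locally injective. A winding-number argument based on the quasi-periodicity, together with the integration by parts $G(v) - W_0 = \psi(v)e^{-v} - \int_v^\infty \psi(\zeta)e^{-\zeta}\,d\zeta$, shows that for $R$ sufficiently large $G|_{S_k \cap \{\mathrm{Re}\,v > R\}}$ is injective with image an (approximate) punctured disc about $W_0$. Pulling back the horizontal segment $\{W : \mathrm{Im}\,W = \mathrm{Im}\,W_0,\ \mathrm{Re}\,W_0 - \varepsilon < \mathrm{Re}\,W < \mathrm{Re}\,W_0\}$ through this local inverse of $G$ then produces, for each $k$, a smooth arc $\gamma_k \subset S_k$ escaping to infinity in $H$ on which $\mathrm{Re}\,G$ increases to $\mathrm{Re}\,W_0$. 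The image $\psi(\gamma_k) \subset \psi(S_k) \subset U$ is a trajectory of (\ref{flow1}) that escapes to infinity inside $U$ in the finite increasing time $\mathrm{Re}\,W_0 - \mathrm{Re}\,G(v_0)$; the disjointness of the strips $\psi(S_k) \subset U$ forces the trajectories to be pairwise disjoint.

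The principal obstacle is the winding-number step of the third paragraph: establishing that the argument of $G(v) - W_0$ changes by exactly $-2\pi$ along each vertical segment of height $2\pi$ with $\mathrm{Re}\,v = R$ large. Heuristically this follows because $G(v) - W_0 \sim c(v) e^{-v}$ with $c(v)$ of sub-exponential growth, so that the $e^{-v}$ factor contributes one full turn and the $c(v)$ factor is asymptotically of bounded argument variation; but making this error control rigorous requires a careful combination of the polynomial Koebe bound on $\psi$ with the integration-by-parts expansion above, in order to pin the winding number down to exactly $\pm 1$ and thereby obtain the bijectivity of $G$ on each half-strip $S_k \cap \{\mathrm{Re}\,v > R\}$.
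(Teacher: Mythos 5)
Your proposal follows essentially the same route as the paper, which deduces Theorem \ref{thm2} from Theorem \ref{thm1}: lift the flow to the half-plane $H$ by the inverse of $\log f$ (your $\psi$, the paper's $\phi$), control its derivative by the Bieberbach--Koebe bound $|\psi''(v)/\psi'(v)|\le 4/\mathrm{Re}\,v$, and realise the escaping trajectories as preimages of horizontal segments under a primitive of $e^{-v}\psi'(v)$. Two remarks on the step you yourself flag as the principal obstacle. First, the paper closes it without any winding-number argument: it proves $W_0-G(w)=\int_w^{+\infty}e^{-t}\psi'(t)\,dt\sim e^{-w}\psi'(\mathrm{Re}\,w)$, so that $\lambda(w)=-\log\left(W_0-G(w)\right)=w+O(\log |w|)$ satisfies $|\lambda'-1|<1/2$ by Cauchy's estimate on a slightly smaller region and is therefore univalent there; the trajectories are the preimages under $\lambda$ of the rays $\{2\pi i j+t:\,t\ge N_4\}$, which is the same thing as your pulled-back horizontal segments approaching $W_0$. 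Second, a caution: the integration by parts you propose, $G(v)-W_0=\psi(v)e^{-v}-\int_v^{\infty}\psi(\zeta)e^{-\zeta}\,d\zeta$, does not isolate a dominant term, because the remaining integral is generically of the \emph{same} order as $\psi(v)e^{-v}$ and cancellation occurs (already for $\psi(\zeta)=\zeta$ one gets $G(v)-W_0=-e^{-v}$, far smaller than $ve^{-v}$). The correct leading term is $-\psi'(v)e^{-v}$ --- the derivative, not the function --- and it is obtained by estimating $\int_v^{\infty}\psi'(\zeta)e^{-\zeta}\,d\zeta$ directly from the slow variation of $\psi'$, exactly as in the paper's (\ref{12a})--(\ref{15a}). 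With that replacement your factor $c(v)$ is $-\psi'(v)$, whose argument varies by $O(1/\mathrm{Re}\,v)$ over a vertical segment of height $2\pi$, and your winding-number computation is then pinned down as intended; so the gap is fillable with the tools you already list, but not via the expansion you wrote.
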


Examples to which Theorem \ref{thm2} applies include
$ f(z) = e^{-z^2} \tan z $: here $\infty$ is an asymptotic value of $f$, but 
the finite critical and asymptotic values  form a bounded set, so that the two singularities of $f^{-1}$ over $\infty$ are logarithmic.
Theorem \ref{thm2} follows from the next result.

\begin{thm}\label{thm1}
 Let $f$ be a meromorphic function on a domain $\Omega \subseteq \C$ 
such that there exist a real number $M > 0$, a domain 
$U \subseteq \Omega$ and an analytic function $F: U \to \C$
with the property that $f = e^F$ on $U$ and $F$ maps $U$ univalently onto the half-plane $H = \{ w \in \C : {\rm Re}\, w > \log M \} $.
Then 
(\ref{flow1})
has infinitely many  pairwise disjoint trajectories $z(t)$ 
on which 
$z(t)$ tends  in finite increasing time  from within $U$ to the extended boundary $\partial_\infty \Omega$
of $\Omega$. 
\end{thm}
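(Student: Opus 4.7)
The plan is to change variables by setting $w = F(z)$ and using the conformal isomorphism $G = F^{-1} \colon H \to U$, which turns the equation $\dot z = f(z) = e^{F(z)}$ into the holomorphic vector field
\[
\dot w = \frac{e^w}{G'(w)}
\]
on the half-plane $H$ (well-posed because $G'$ is nowhere zero by univalence).

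The key simplification is the observation that the function
$\Phi(w) := \int_{w_*}^w G'(\xi)\, e^{-\xi}\, d\xi$,
well-defined and holomorphic on the simply connected $H$, linearizes the flow: a direct computation shows $\frac{d}{dt}\Phi(w(t)) = G'(w)\,e^{-w}\cdot e^w/G'(w) = 1$ along any trajectory. Since $\Phi'(w) = G'(w)\,e^{-w}$ never vanishes, $\Phi$ is a local homeomorphism onto an open set $V := \Phi(H) \subseteq \C$, and trajectories correspond exactly to horizontal segments in $V$ lifted through $\Phi$. The problem therefore reduces to understanding $V$ and its ``boundary at infinity''.

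The main analytic input is to show that, for each $\tau \in \R$,
\[
\int_{\log M + 1}^{\infty} |G'(\sigma + i\tau)|\, e^{-\sigma}\, d\sigma < \infty.
\]
This should come from the hyperbolic form of Koebe's distortion theorem, which gives $|G'(w)|\,(\mathrm{Re}\, w - \log M) \leq 4\,\mathrm{dist}(G(w), \partial U)$; iterating this along a horizontal ray bounds $|G(\sigma + i\tau)|$ by at most a power of $\sigma$, which the exponential weight $e^{-\sigma}$ easily dominates. The consequence is that the limit $\Phi_\infty(\tau) := \lim_{\sigma \to \infty} \Phi(\sigma + i\tau) \in \C$ exists, and the real-analytic curve $\tau \mapsto \Phi_\infty(\tau)$ represents the image of ``$w = \infty$'' under $\Phi$. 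Moreover $|\Phi(\sigma + i\tau) - \Phi_\infty(\tau)|$ decays geometrically in $\sigma$, so $\Phi$ maps a right half-infinite strip near $\{\mathrm{Re}\, w = \infty,\, \mathrm{Im}\, w = \tau\}$ onto a small pointed neighborhood of $\Phi_\infty(\tau)$.

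To finish, I would take a short horizontal segment in $V$ ending at $\Phi_\infty(\tau)$ and lift it via $\Phi$: the linearization makes the lift a trajectory $w(t)$ of finite lifetime $\beta$ on which $\mathrm{Re}\, w(t) \to \infty$, so the $z$-trajectory satisfies $|f(z(t))| = e^{\mathrm{Re}\, w(t)} \to \infty$, and meromorphy of $f$ on $\Omega$ forces $z(t)$ to leave every compact subset of $\Omega$, i.e., to tend to $\partial_\infty \Omega$. Different $\tau$ with distinct values of $\mathrm{Im}\,\Phi_\infty(\tau)$ give horizontal segments on distinct horizontal lines in $V$, so the resulting trajectories in $U$ are pairwise disjoint; non-constancy of the real-analytic function $\tau \mapsto \mathrm{Im}\,\Phi_\infty(\tau)$ produces infinitely many such $\tau$. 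The hardest part will be making this last step rigorous: one must rule out that the chosen horizontal segment in $V$ first exits $V$ through the part of $\partial V$ coming from $w \to \partial H$ (which would correspond to $|f(z)| \to M$ rather than $\infty$), and this requires a careful Lindelof-type accessibility argument near $\Phi_\infty(\tau)$ based on the near-conformal behavior of $\Phi$ at the right end of $H$.
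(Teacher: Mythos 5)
Your change of variables and the linearizing primitive $\Phi(w)=\int_{w_*}^{w}G'(\xi)e^{-\xi}\,d\xi$ are exactly the device used in the paper (there $G=\phi$ and $\psi(w)=\int_{w}^{+\infty}e^{-t}\phi'(t)\,dt$, so $\Phi=D-\psi$ with $D$ as in (\ref{10a})), and your key analytic input --- convergence of $\int^{\infty}|G'(\sigma+i\tau)|e^{-\sigma}\,d\sigma$ via Koebe--Bieberbach distortion --- is the same as the paper's estimates (\ref{4a})--(\ref{7}). The genuine gap is in your final step. The limit $\Phi_\infty(\tau)=\lim_{\sigma\to\infty}\Phi(\sigma+i\tau)$ is \emph{constant} in $\tau$: for fixed $\tau,\tau'$ the difference $\Phi(\sigma+i\tau)-\Phi(\sigma+i\tau')$ is an integral over a vertical segment of bounded length on which $|G'|$ is comparable to $|G'(\sigma)|$ and hence grows at most polynomially in $\sigma$ (by the same distortion inequality $|G''/G'|\le 4/\mathrm{Re}\,w$), so the difference is $O(\sigma^{C}e^{-\sigma})\to 0$. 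Thus $\Phi_\infty(\tau)\equiv D$, the function $\tau\mapsto\mathrm{Im}\,\Phi_\infty(\tau)$ is constant rather than non-constant real-analytic, and your proposed source of infinitely many trajectories --- horizontal segments on distinct horizontal lines ending at distinct boundary points of $V$ --- collapses to a single horizontal segment ending at the single point $D$.

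The infinitude comes from a different mechanism, which is the part your outline is missing: near the right end of $H$ one has $D-\Phi(w)=\psi(w)\sim e^{-w}G'(\mathrm{Re}\,w)$, whence $\lambda(w)=-\log\bigl(D-\Phi(w)\bigr)=w+O(\log|w|)$ is univalent on a suitable parabolic subregion $\{x+iy: x>N_2,\ |y|<x^{1/4}\}$ of $H$; the preimages under $\lambda$ of the rays $\{2\pi i j+t:\ t\ge N_4\}$ for distinct $j$ are then pairwise disjoint paths tending to infinity, each of which is a lift of the \emph{same} segment $(D-h,D)$ and hence a finite-lifetime trajectory. In other words, $\Phi$ covers a punctured neighbourhood of $D$ infinitely many times, and it is the multiplicity of this covering --- not a multiplicity of limit points $\Phi_\infty(\tau)$ --- that produces infinitely many disjoint trajectories; the univalence of $\lambda$ is what guarantees their disjointness, and the fact that these paths stay inside the parabolic subregion is what disposes of your worry about the lifted segment escaping through the image of $\partial H$. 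A second, smaller gap: ``$|f(z(t))|\to\infty$ plus meromorphy forces $z(t)\to\partial_\infty\Omega$'' is not automatic, since $z(t)$ could a priori accumulate at a pole of $f$ lying in $\Omega\cap\partial U$; the paper rules this out by observing that the level curve of $|f|$ through such an accumulation point would have to lie in a neighbourhood on which $f$ is finite-valent while containing the infinitely many points $\phi(v_j+2\pi i k)$ with one and the same $f$-value.
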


Here the statement that a trajectory $z(t)$ tends  in finite increasing time from within $U$ to the extended boundary of $\Omega$
means that
there exists $T \in \R$ with the following property:
to each compact set $K_0 \subseteq \Omega$ corresponds $t_0 \in (- \infty,  T)$ with $z(t) \in U \setminus K_0$ for $t_0 < t < T$. 
To deduce Theorem \ref{thm2} from Theorem \ref{thm1} it is only necessary to  
take $\Omega = \C$ and $U$ to be a neighbourhood of the logarithmic singularity of $f^{-1}$ over $\infty$, so that
$|z(t)| \to + \infty$ as $t \to T-$. 

\section{Preliminaries}\label{prelim}

If the function $f$ is meromorphic and non-constant on a  domain $D \subseteq \C$, and 
$w \in D$ with $f(w) \neq \infty$, then the trajectory 
of  (\ref{flow1}) through $w $ 
is the path $z(t)=\zeta_w(t) \in D$ with $z(0) = w$ and $z'(t) = f(z(t)) \in \C$ for $t$ in some maximal interval $(\alpha, \beta)  \subseteq \R$.
If $f(w) = 0$ then  $\zeta_w(t) = w$ for all $t \in \R$.
When $f(w) \neq 0$ the trajectory passes through no zeros of $f$, 
and is either simple (that is, $\zeta_w(t)$ is injective on $(\alpha, \beta) $)
or periodic (in which case $(\alpha, \beta)=\R $). 

Some standard facts concerning (\ref{flow1}) near poles  of $f$ will now be summarised:
for details, see \cite{brickman,hajek1,kingneedham}. 
If $f(z) \sim c (z-z_0)^{-m}$ as $z \to z_0$, for some $c \neq 0$ and $m \geq 0$, 
then a conformal mapping $w = \phi(z)$ is defined near $z_0$ 
by $\phi(z)^{m+1} = \int_{z_0}^z 1/f(u) \, du $, which 
gives $(m+1) w^m \dot w = 1$ and $w^{m+1}(t) = w^{m+1}(0) + t$. The equation for $w$ has  $m+1$ pairwise disjoint
trajectories tending
to $0$ in increasing time, determined by choosing $w^{m+1}(0) \in (-\infty, 0) \subseteq \R$. Thus (\ref{flow1}) has precisely $m+1$ trajectories
tending to $z_0$ in increasing time (each taking finite time to do so). 

If $D$ contains an annulus $R < |z| < \infty$ and $f$ has a pole of order $n \geq 2 $ at infinity, then setting $w = 1/z$ gives 
$\dot w = g(w) = -f(z)/z^2$, so that $g$ has a pole of order $n-2$ at $w=0$ 
and (\ref{flow1}) has $n-1$ trajectories tending to infinity 
in finite increasing time: this proves the result of King and Needham \cite{kingneedham} referred to in the introduction.

Theorem \ref{thm0} requires the following lemma:
a proof is included for completeness.

\begin{lem}
 \label{limptlemmero}
Let the function $f$ be meromorphic and non-constant on  $ \C$. 
Let $z(t)$ be a trajectory of (\ref{flow1}), with maximal interval of definition 
$(a_0, b_0) \subseteq \R$, and assume that $b_0 < \infty$. 
Then $\lim_{t \to b_0-} z(t) $ exists and is either $\infty$ or a pole of $f$. 
\end{lem}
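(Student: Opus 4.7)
The plan is to study the $\omega$-limit set $\Omega$ of the trajectory as $t \to b_0-$, viewed as a subset of the Riemann sphere $\C \cup \{\infty\}$, and to show that it consists of a single point, either $\infty$ or a pole of $f$. Nonemptiness of $\Omega$ is immediate from compactness of the sphere. Connectedness follows from a standard argument: if $\Omega$ admitted a separation into disjoint nonempty closed sets $A$ and $B$, then spherical open neighbourhoods $U_A \supset A$, $U_B \supset B$ with disjoint closures, together with the fact that the spherical distance $d(z(t), \Omega)$ tends to $0$, would force $z(t)$ to lie eventually in $U_A \cup U_B$; connectedness of $(a_0, b_0)$ would then pin $z(t)$ eventually inside just one of $U_A$ and $U_B$, contradicting that both $A$ and $B$ are accumulation sets of $z(t)$.

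The first substantive step is to show that $\Omega \subseteq P \cup \{\infty\}$, where $P$ denotes the pole set of $f$. If some $p \in \Omega \cap \C$ were not a pole, then $f$ would be holomorphic, hence locally Lipschitz, on a neighbourhood $V$ of $p$; the standard local existence theorem for ODEs then furnishes an $\varepsilon > 0$, depending only on $V$, such that any initial value problem $\dot w = f(w)$, $w(s_0) = w_0$, with $w_0$ in a smaller neighbourhood of $p$ and arbitrary $s_0 \in \R$, admits a solution on $[s_0, s_0 + \varepsilon]$. Choosing $n$ with $t_n > b_0 - \varepsilon$ and $z(t_n)$ close to $p$, uniqueness of solutions then extends $z$ past $b_0$, contradicting maximality of $(a_0, b_0)$.

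Finally, connectedness is used to collapse $\Omega$ to a point. The poles of $f$ are isolated in $\C$, so any pole $z_1 \in \Omega$ admits a spherical neighbourhood meeting $P \cup \{\infty\}$ only in $\{z_1\}$; hence $\{z_1\}$ is open in $\Omega$, and being closed in the Hausdorff sphere it is clopen, so connectedness forces $\Omega = \{z_1\}$. Otherwise $\Omega$ contains no pole at all, and since it is a nonempty subset of $\{\infty\}$ it must equal $\{\infty\}$. In either case $\lim_{t \to b_0-} z(t)$ exists on the sphere and is either a pole of $f$ or $\infty$. The most delicate point is the connectedness of $\Omega$; once this is established, the observation that each individual pole is isolated in $P \cup \{\infty\}$, irrespective of whether poles accumulate at $\infty$, is enough to finish.
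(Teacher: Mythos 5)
Your proof is correct, but it follows a genuinely different route from the paper's. The paper works entirely with complex-analytic tools: near a finite limit point $z_0$ with $f(z_0)\neq 0,\infty$ it rectifies the flow via $\phi(z)=\int_{z_0}^z 1/f(s)\,ds$ and shows the trajectory would need time at least $\rho$ to cross an annulus around $z_0$ infinitely often, forcing convergence to $z_0$ and an extension past $b_0$; it establishes existence of the limit by a separate argument with a circle $X$ on which $f\neq 0,\infty$; and it disposes of zeros of $f$ by an explicit integral estimate showing that approaching a zero takes infinite time. You instead invoke the standard $\omega$-limit-set machinery on the sphere (nonempty, connected) together with Picard--Lindel\"of with a uniform local existence time, which excludes \emph{every} point where $f$ is holomorphic --- zeros included --- in one stroke, and then collapse the limit set to a point using connectedness plus the isolation of poles. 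What your approach buys is economy: the uniform-existence-time argument makes the separate treatment of zeros unnecessary, and the connectedness of the $\omega$-limit set replaces the paper's ad hoc circle argument. What the paper's approach buys is self-containedness within complex function theory (no appeal to general ODE or dynamical-systems facts) and, as a by-product, the quantitatively stronger statement that a trajectory tending to a zero of $f$ must take infinite time to do so. Two points you state without proof --- that $d(z(t),\Omega)\to 0$ as $t\to b_0-$, and the uniformity of the Picard--Lindel\"of existence time over a compact neighbourhood --- are both standard and follow from compactness, so I regard them as acceptable, but a referee might ask you to spell out the first one.
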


\begin{proof} Following \cite{hajek2}, a point $z_0 \in \C \cup \{ \infty \}$ is
called a limit point of $z(t)$ as $t \to b_0-$ if there exist 
$s_n \in (a_0, b_0)$ with $s_n \to b_0-$ and
$z(s_n) \to z_0$ as $n \to \infty$. 
Suppose that $z_0 \in \C$ with 
$f(z_0) \neq 0, \infty $ is such a limit point. Writing $u(t) = \phi(z(t)) $,
where $\phi(z) = \int_{z_0}^{z} 1/f(s) \, ds$, transforms
(\ref{flow1}) near $z_0$ to $\dot u = 1$. Let $\rho$ be small and positive and let 
$U = \phi^{-1}(B(0, 2 \rho )) $ and $V = \phi^{-1}(B(0,  \rho ))$, with $B(a, r)$ the open disc
of centre $a$ and radius $r$. Then any trajectory of (\ref{flow1}) which meets $V$ must subsequently 
travel from the boundary of $V$ to that of $U$, taking time at least $\rho$ to do so. 
Since $b_0$ is finite this implies that $z(t) \to z_0$ as $t \to b_0-$, and that the trajectory extends 
beyond time $t = b_0$, contrary to assumption. 
Thus any finite limit point $z_0$ of $z(t)$ as $t \to b_0-$ has $f(z_0) \in \{ 0, \infty \}$. 

It follows that if $z_0 \in \C \cup \{ \infty \}$ is a limit point of $z(t)$ as $t \to b_0-$, 
then $\lim_{t \to b_0-} z(t) = z_0$. If this is not the case then, with $\chi $ denoting the spherical metric on the extended complex plane,
 there exists a small positive $\sigma$ such that
$f(z) \neq 0, \infty $ on $X = \{ z \in \C : \chi (z, z_0)  = \sigma \} $
and $z(t)$ meets $X$  infinitely often as $t \to b_0-$. 
But this gives $z_0' \in X$ 
such that $z_0'$ is a limit point of $z(t)$ as $t \to b_0-$, and hence a contradiction.

It remains only to note that if $z_0 $ is a zero of $f$ then it takes infinite time for any trajectory of (\ref{flow1}) to tend to $z_0$. 
To see this, take $C > 0$ and $m \in \N$ such that 
$|f(z)| \leq C |z-z_0|^m $ as $z \to z_0$. Let $n$ be large and take any trajectory $z(t)$ 
such that $|z(t_n) - z_0| = 2^{-n}$ and $|z(t_{n+1}) - z_0| = 2^{-n-1}$ 
and $2^{-n-1} \leq |z(t) - z_0| \leq 2^{-n}$ for $t_n \leq t \leq t_{n+1}$. This yields 
$$
2^{-n-1} \leq | z(t_{n+1})-z(t_n)| = \left| \int_{t_n}^{t_{n+1}} f(z(t)) \, dt \right| \leq (t_{n+1}-t_n) C 2^{-nm} 
$$
and so $t_{n+1}- t_n \geq C^{-1} 2^{(m-1)n-1} \geq 1/2C.$ 
\end{proof}

The remainder of this section will be occupied with the proof of the following.

\begin{prop}
 \label{prop1}
Let the function $f$ be transcendental and meromorphic in the plane, and assume the existence of an unbounded set 
$F_1 \subseteq [1, \infty)$ 
and a function $N(r): F_1 \to [1, \infty)$ 
with 
\begin{equation}
 \label{Nrdef}
\lim_{r \to \infty, r \in F_1} N(r) = \infty, 
\end{equation}
such that for each $r \in F_1$ there exists $z_r $ with $|z_r| = r$ and $f(z_r) \neq 0$ and 
\begin{equation}
 \label{Nlim4}
f(z) = (1+o(1))   \left( \frac{z}{z_r} \right)^{N(r)} f(z_r)   \quad \hbox{on} \quad  D(z_r, 8), 
\end{equation}
as $r \to \infty$ in $F_1$, where 
\begin{equation}
\label{Nlim1}
D(z_r, L) = 
\left\{  z_r e^\tau : \, 
\max \{ | {\rm Re} \, \tau  | , \, | {\rm Im } \, \tau | \} \leq L N(r)^{-5/8} \,\right\}.
\end{equation}
Then for all sufficiently large $r \in F_1$ there exist 
$Q \geq N(r)^{1/4} $  points $Y_1, \ldots, Y_Q $ in $ D(z_r, 1)$, each with the property that 
the trajectory $\gamma_j = \zeta_{Y_j}$ with $\zeta_{Y_j}(0) = Y_j$ of (\ref{flow1}) has maximal interval of definition 
$(\alpha_{Y_j}, \beta_{Y_j})$, where 
\begin{equation} 
\label{betajdef}
\beta_{Y_j} \leq P_r = \frac{ 2r  }{|f(z_r)| (N(r)-1) \exp( N(r) ^{1/4} ) } .
\end{equation}
These trajectories $\gamma_j$ are pairwise disjoint. 
\end{prop}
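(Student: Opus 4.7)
The plan is to reduce (\ref{flow1}) to a near-linear ODE by a Wiman--Valiron-style change of variables, and then use a Rouch\'e-type argument to pass from the explicit blow-up of the linearised equation to actual blow-up trajectories of (\ref{flow1}). With $\tau = \log(z/z_r)$ and $W = z^{-(N-1)} = e^{-(N-1)\tau}/z_r^{N-1}$, equation (\ref{flow1}) becomes
$$
\dot W = -C\,(1+\epsilon_1), \qquad C := (N-1)\,f(z_r)/z_r^N,
$$
on $D(z_r,8)$, where $|\epsilon_1|\le \eta = o(1)$ as $r\to\infty$ in $F_1$ by (\ref{Nlim4}); the decisive feature is that $W=0$ corresponds exactly to $z=\infty$. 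For $s_0 = N^{1/4}/(N-1)$ and $\psi_k = [\arg(z_r/f(z_r)) + 2\pi k]/(N-1)$ with $k\in\Z$, the point $Y_k := z_r\exp(s_0 + i\psi_k)$ lies in $D(z_r,1)$ whenever $|\psi_k|\le N^{-5/8}$, and a direct calculation yields $W(0;Y_k) = (P_r/2)\,C$, so the linearised trajectory $t\mapsto W(0;Y_k)-Ct$ hits $0$ at $t^\ast = P_r/2 < P_r$. The number of such $k$ is at least $(N-1)N^{-5/8}/(2\pi) \ge N^{1/4}$ for $N$ large enough.

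For each admissible $\psi_k$, consider the holomorphic map $\Phi(s,\psi) = W(P_r/2;\,z_r e^{s+i\psi})$, defined wherever the trajectory survives to time $P_r/2$, on a disc $\Delta_k$ of radius $\rho_1 = 2\eta/(N-1)$ about $(s_0,\psi_k)$. A Gronwall-type estimate along the portion of each trajectory inside $D(z_r,8)$ gives $|\Phi-\Phi^{\mathrm{approx}}|\le \eta\,|CP_r/2|$, while a Taylor expansion of $\Phi^{\mathrm{approx}}(s,\psi) = W(0;z_r e^{s+i\psi}) - CP_r/2$ at its simple zero $(s_0,\psi_k)$ shows $|\Phi^{\mathrm{approx}}|\gtrsim (N-1)\rho_1\,|CP_r/2|$ on $\partial\Delta_k$; the choice of $\rho_1$ then forces the Rouch\'e inequality on $\partial\Delta_k$. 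One concludes that either $\Phi$ is undefined somewhere in $\Delta_k$ (the trajectory from that initial point has blown up strictly before $P_r/2$), or $\Phi$ has a zero in $\Delta_k$ (so $W = 0$ at $P_r/2$, i.e.\ the trajectory reaches $z = \infty$ at exactly $t = P_r/2$); in either case one obtains an initial point $Y\in\Delta_k$ with $\beta_Y\le P_r/2\le P_r$. As the $\psi_k$ are spaced by $2\pi/(N-1)\gg\rho_1$, the discs $\Delta_k$ are pairwise disjoint, yielding $Q\ge N^{1/4}$ distinct initial points $Y_1,\dots,Y_Q$; the corresponding trajectories are pairwise disjoint by uniqueness of solutions to (\ref{flow1}).

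The principal technical obstacle is that the approximate blow-up time $P_r/2$ coincides, to exponentially small precision in $N$, with the time $\tilde t$ at which the approximate trajectory leaves the Wiman--Valiron domain $D(z_r,8)$ through its right edge ($\mathrm{Re}\,\tau = 8N^{-5/8}$). Consequently the Gronwall bound on $|\Phi - \Phi^{\mathrm{approx}}|$ has to be maintained essentially up to $P_r/2$: one must verify that, uniformly in $(s,\psi)\in\Delta_k$, the perturbed trajectory remains in $D(z_r,8)$ for nearly the entire interval $[0,P_r/2]$, delicately balancing the narrow margin $P_r/2-\tilde t$ before exit against the error $\eta\,|CP_r/2|$ required for Rouch\'e.
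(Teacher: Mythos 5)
There is a genuine gap, and it sits exactly where you flag ``the principal technical obstacle'': the blow-up you are trying to detect can only occur \emph{outside} the region on which your estimates are valid. The hypothesis (\ref{Nlim4}) controls $f$ only on $D(z_r,8)$, and on that set $|W|=|z|^{-(N-1)}$ is bounded below by $r^{-(N-1)}\exp\left(-8(N-1)N^{-5/8}\right)>0$; the event $W=0$ (i.e.\ $z=\infty$) is not attainable there. Hence any trajectory heading toward $W=0$ must first cross the edge ${\rm Re}\,\tau=8N^{-5/8}$ of $D(z_r,8)$, and beyond that edge you have no information about $f$ at all: the trajectory may decelerate near a zero of $f$, loop back, or simply exist for all time. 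Your dichotomy ``either $\Phi$ is undefined somewhere in $\Delta_k$ or $\Phi$ has a zero in $\Delta_k$'' therefore omits the decisive third alternative: the trajectory is defined at time $P_r/2$ but has left $D(z_r,8)$ earlier, in which case neither the bound $|\Phi-\Phi^{\mathrm{approx}}|\le\eta\,|CP_r/2|$ (which needs $|\epsilon_1|\le\eta$ \emph{along the whole trajectory}) nor the Rouch\'e inequality on $\partial\Delta_k$ can be established, and nothing follows about $\beta_Y$. This cannot be fixed by ``delicate balancing'': (\ref{Nlim4}) supplies only a multiplicative error $o(1)$, whereas bridging the gap from the exit edge of $D(z_r,8)$ to $z=\infty$ would require control of $f$ on a region where nothing is assumed. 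A telling sign that the approach proves too much: if it worked, it would show the trajectories reach $z=\infty$ itself, which is strictly stronger than $\beta_{Y_j}\le P_r$; the paper is explicit (end of Section \ref{nonzero}) that the trajectories produced by Proposition \ref{prop1} may in general end at poles. A secondary gap: ``pairwise disjoint by uniqueness of solutions'' only gives that two trajectories coincide or are disjoint; you must still rule out that distinct $Y_j$, $Y_k$ lie on one and the same trajectory.

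The paper's proof is engineered precisely to avoid tracking trajectories outside the controlled region. It uses the exact linearisation $Z=F(z)=\int 1/f$, so $\dot Z=1$, builds $2Q$ domains $V_r^j\subseteq D(z_r,1)$ mapped univalently by $F$ onto the slit annulus $S_r<|w|<T_r$, and argues by contradiction: if no trajectory leaving the inner arc $I_r$ has $\beta\le P_r=2S_r$, then every $w\in I_r$ with ${\rm Re}\,F(w)<0$ must be \emph{recurrent}, returning to $I_r$ at the reflected point $w'$ with $F(w')=F(w)^*$. The excursion into the uncontrolled region is never followed; its return time and return point are pinned down solely by Cauchy's theorem applied to $1/f$ around the Jordan curve $\Gamma_w$, and the continuity argument of Lemmas \ref{Jrclosed} and \ref{Jrclosed3} then yields a topological contradiction at the point with $F(w)=-S_r$, whose reflection is ambiguous between the two endpoints of $I_r$. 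Disjointness of the resulting trajectories is obtained from the time estimates of Lemma \ref{lemexit}, not from uniqueness alone.
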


To prove Proposition \ref{prop1}, let  $r \in F_1$ be large, let $N = N(r)$ and  define $w_r$ by 
$w_r = z_r \exp \left(  4 N^{-5/8}  \right)$.
Then  (\ref{Nrdef}), (\ref{Nlim4}) and Cauchy's estimate for derivatives yield 
\begin{eqnarray}
 A(z) &=&  \frac1{f(z)}  = \left( \frac{z}{w_r} \right)^{-N} A(w_r) ( 1 + \mu (z) ) , \nonumber \\
\quad \mu (z) &=& o(1), \quad \mu'(z) = o \left( \frac{N^{5/8}}{r}  \right),
\label{Nlim3}
\end{eqnarray}
uniformly for $z$ in $D(z_r, 4)$. Again for $z$ in $D(z_r, 4)$, set 
\begin{eqnarray}
 Z &=& F(z)  = \frac{ w_r A(w_r)  }{1-N} + \int_{w_r}^z A(t) \, dt \nonumber \\
&=&
\frac{ w_r A(w_r)  }{1-N} + \int_{w_r}^z  \left( \frac{t}{w_r} \right)^{-N} A(w_r) ( 1 + \mu (t) ) \, dt ,
\label{Nlim5}
\end{eqnarray}
and let $\sigma_z$ be the path 
from $w_r$ to $z$ which consists of the radial segment from $w_r$ to
$\widehat z = w_r |z/w_r|$ followed by the shorter circular arc from $\widehat z$ to $z$. Then  $\sigma_z$ has 
length $O(r N^{-5/8} )$ and  $|w_r| \geq |t| \geq |z|$ on $\sigma_z$, so (\ref{Nlim3})  and integration by parts along $\sigma_z$ yield
\begin{eqnarray*}
\int_{w_r}^z t^{-N} \mu (t) \, dt &=&
o\left(  \frac{|z|^{1-N} }{N-1} \right) - \int_{w_r}^z o \left( \frac{N^{5/8}}{r}  \right) \, \frac{  t^{1-N} }{1-N} \, 
  dt = o\left(  \frac{|z|^{1-N} }{N-1} \right)  .
\end{eqnarray*}
Hence $Z$ satisfies, still for $z \in D(z_r, 4)$, using (\ref{Nlim4}) and (\ref{Nlim5}), 
\begin{eqnarray}
\label{b7}
Z &=& F(z) \sim \frac{  z^{1-N}A(w_r) }{ w_r^{-N} (1-N) }  
\sim \frac{  z^{1-N}A(z_r) }{ z_r^{-N} (1-N) } , \nonumber \\
\quad |Z| &\sim& \left| \frac{z}{r} \right|^{1-N} T_r, \quad T_r  = \frac{r |A(z_r)|}{N-1},
\end{eqnarray}
and 
\begin{equation}
 \label{b7a}
\log Z = (1-N) \log \frac{z}{z_r} + \log \frac{z_rA(z_r)}{1-N} + o(1), 
\end{equation}
where  $\log (z/z_r)$ is chosen so as to vanish at $z_r$, and 
$\log (z_rA(z_r)/(1-N))$ is the principal value.

\begin{lem}
 \label{lemwv}
Any sub-trajectory $\Lambda \subseteq D(z_r, 4)$ of the flow (\ref{flow1}) 
is a level curve on which ${\rm Im} \, F(z)$ is constant and
${\rm Re} \, F(z)$ increases in increasing time. If $\Lambda$ joins $w_0$ to $w_1$  then the time taken for the flow (\ref{flow1}) to traverse 
$\Lambda$ is 
$$
\int_{w_0}^{w_1} \frac{dt}{dz} \, dz = \int_{w_0}^{w_1} \frac1{f(z)} \, dz = F(w_1) - F(w_0).
$$
Let $Q = Q_r$ be the largest positive integer not exceeding $2 N^{1/4}$. Then 
provided $r \in F_1$ is  large enough there exists a domain $\Omega_r $, the closure of which lies in $D(z_r, 1)$, such that 
$Y = \log Z$ maps $\Omega_r$ univalently onto the rectangle
\begin{eqnarray}
 \label{Srdef}
G_r &=& 
\{ Y \in \C : \log S_r < {\rm Re} \, Y < \log T_r, \quad 0 < {\rm Im} \, Y < 4Q \pi \} , \nonumber \\
S_r &=& T_r \exp( - N^{1/4} ) = \frac{P_r}2 ,
\end{eqnarray}
and $S_r = o(T_r)$ as $r \to \infty$ with $r \in F_1$.  The boundary of $\Omega_r$ contains a
simple arc $L_r$ such that, as $z$ describes the arc $L_r$ once, the image $w=Z=F(z)$ describes $2Q$ times the circle 
$|w| = S_r$, starting from $w=S_r$.  
Moreover, $\Omega_r$ contains $2Q$  pairwise disjoint simply connected domains $V_r^1, \ldots , V_r^{2Q}$, 
each  mapped univalently by $F$ onto 
$\{ w \in \C : S_r  < |w| < T_r  , \, 0 < \arg w < 2 \pi \}.$
These domains  have the following additional properties.

Let $V_r$ be any one of the $V_r^j$. 
Then $\partial V_r$ consists of the following: two simple arcs  $I_r\subseteq L_r $  and $J_r $ 
mapped by $F$ onto the circles $|w| = S_r$ and $|w| = T_r$ respectively;
two  sub-trajectories of (\ref{flow1}) mapped by $F$ onto the interval $[ S_r, T_r]$.
\end{lem}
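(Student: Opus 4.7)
The plan is to establish the three geometric facts — trajectories as level curves, the univalent map $Y$, and the slicing into $V_r^j$ — in turn. The first is immediate from the chain rule: along any trajectory one has $dF/dt = F'(z)\dot z = A(z) f(z) = 1$, so $F(z(t)) = F(z(0))+t$. Hence on any sub-trajectory $\mathrm{Im}\, F$ is constant and $\mathrm{Re}\, F$ is strictly increasing, and the travel time from $w_0$ to $w_1$ along the sub-trajectory is $F(w_1) - F(w_0) = \int_{w_0}^{w_1} dz/f(z)$.

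For $\Omega_r$, introduce the single-valued coordinate $\tau = \log(z/z_r)$ on $D(z_r, 4)$ and rewrite (\ref{b7a}) as
\[
Y = \log Z = (1-N)\tau + c_r + \epsilon(\tau), \qquad c_r = \log\frac{z_r A(z_r)}{1-N}, \qquad \epsilon(\tau) = o(1).
\]
Cauchy's estimate across the gap of width $\asymp N^{-5/8}$ from $D(z_r,1)$ to $\partial D(z_r,4)$ in the $\tau$-coordinate gives $\epsilon'(\tau) = o(N^{5/8})$ on $D(z_r,1)$, so $dY/d\tau = (1-N)(1+o(1))$ and $Y$ is locally conformal there. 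The affine model $Y_0(\tau) = (1-N)\tau + c_r$ carries a rectangle $R_r$ with both side-lengths $O(N^{1/4}/(N-1)) = o(N^{-5/8})$ bijectively onto $G_r$; after choosing the $2\pi i$-translate of $c_r$ so that the imaginary part of $R_r$ lies in $[-N^{-5/8},N^{-5/8}]$, the rectangle $R_r$ sits inside $D(z_r,1)$. A Rouch\'e comparison of $Y(\tau)-Y_*$ with $Y_0(\tau)-Y_*$ along $\partial R_r$ for each fixed $Y_* \in G_r$ — the dimensions of $G_r$ being $N^{1/4}$ and $4Q\pi \sim 8\pi N^{1/4}$, both much larger than $|\epsilon|=o(1)$ — then produces $\Omega_r$ mapped univalently by $Y$ onto $G_r$, with $\overline{\Omega_r} \subset D(z_r,1)$. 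The bound $S_r/T_r = e^{-N^{1/4}} \to 0$ is immediate.

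Given $\Omega_r$, set $L_r$ to be the preimage under $Y$ of the left edge $\{\mathrm{Re}\, Y = \log S_r,\ 0 < \mathrm{Im}\, Y < 4Q\pi\}$; as $Y$ sweeps this segment, $Z = e^Y$ winds $4Q\pi/(2\pi) = 2Q$ times around $|w| = S_r$ starting from $Z = S_r$. Split $G_r$ into the $2Q$ horizontal strips $G_r^j = G_r \cap \{2(j-1)\pi < \mathrm{Im}\, Y < 2j\pi\}$; since $e^Y$ is univalent on each strip of height $2\pi$, $F = e^Y$ carries each $V_r^j := Y^{-1}(G_r^j)$ univalently onto the slit annulus $\{S_r < |w| < T_r,\ 0 < \arg w < 2\pi\}$. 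The vertical sides of $G_r^j$ pull back to arcs $I_r \subseteq L_r$ and $J_r$ mapped by $F$ onto the circles $|w|=S_r$ and $|w|=T_r$, while the horizontal sides $\{\mathrm{Im}\, Y = 2k\pi\}$ have $\mathrm{Im}\, F = 0$ with $\mathrm{Re}\, F$ ranging over $[S_r,T_r]$, and so by the first paragraph their preimages are the required sub-trajectories of (\ref{flow1}).

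The main technical hurdle I expect is the Rouch\'e step producing $\Omega_r$: one must control $\epsilon(\tau)$ on $\partial R_r$ against the dimensions of $G_r$ uniformly in $Y_* \in G_r$ (including $Y_*$ close to $\partial G_r$, which may require shrinking $R_r$ slightly or applying the argument principle directly to $Y|_{\partial R_r}$ as a simple closed curve winding once around $G_r$), and to pick the $2\pi i$-translate of $c_r$ that actually places $\Omega_r$ inside $D(z_r,1)$. Once that is handled, all remaining assertions follow by translating, via the univalent map $Y$, the obvious picture in $G_r$ (a rectangle partitioned into $2Q$ horizontal sub-rectangles) back to $z$-coordinates.
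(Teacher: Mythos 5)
Your proposal is correct and follows essentially the same route as the paper: the first two assertions come from $\dot Z=1$, and the construction of $\Omega_r$, $L_r$ and the $V_r^j$ rests on (\ref{b7}) and (\ref{b7a}) showing that $\log Z$ is (approximately affine and hence) univalent as a function of $\log (z/z_r)$, after which $G_r$ and its $2Q$ horizontal strips are pulled back. The paper states this univalence directly from the derivative estimate, while you supply the same fact via a Rouch\'e/argument-principle comparison with the affine model; the two are interchangeable here.
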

\begin{proof} 
The first two assertions hold because writing $Z = F(z)$ gives $\dot Z = 1$. 
The existence of $\Omega_r$, $L_r$ and the $V_r^j$ follows from (\ref{b7}) and (\ref{b7a}),  which imply 
that  $\log Z$ is a univalent function of $\log z$ on $D(z_r, 7/2)$. In particular, $L_r$ is the pre-image 
under $\log Z$ of  $\{ \log S_r + i \sigma : \, 0 \leq \sigma \leq 4 Q \pi \}$.  
Finally, (\ref{Nrdef}), (\ref{betajdef}), (\ref{Nlim3}), (\ref{b7}) and (\ref{Srdef}) give $P_r = 2S_r = o(T_r)$. 
\end{proof}

Assume henceforth that $r \in F_1$ is so large that Lemma \ref{lemwv} gives $P_r = 2S_r < T_r - S_r$. 
Choose some $V_r = V_r^j$ and let $W_r$ be the closure of $V_r$. 
The next lemma describes the behaviour of the trajectory $\zeta_w(t)$ of (\ref{flow1}) through $\zeta_w(0) = w \in  I_r$. 
\begin{lem}
 \label{lemexit}
Suppose that $w \in I_r$ 
and ${\rm Re} \, F(w) \geq 0$. Then there exists $t_w \geq T_r - S_r$ 
such that $\zeta_w(t) \in W_r \setminus (J_r \cup I_r)$ for $0 < t < t_w$, while 
$\zeta_w(t_w) \in J_r$.
If ${\rm Re} \, F(w) >  0$ and $t < 0$ and $|t| $ is small, then $|F(\zeta_w(t))| < S_r $. 

Similarly, if $w \in I_r$ 
and ${\rm Re} \, F(w) \leq 0$, 
there exists $t_w \leq S_r - T_r$ such that $\zeta_w(t) \in W_r \setminus (J_r \cup I_r)$ for $t_w < t < 0$, while 
$\zeta_w(t_w) \in J_r$.
If ${\rm Re} \, F(w) <  0$ and $t > 0$  is small, then $|F(\zeta_w(t))| < S_r $.
If $w \in I_r$ 
and ${\rm Re} \, F(w) = 0$, then $\zeta_w(t)$ travels from $w$ to $J_r$ via $W_r$ in both increasing and decreasing time.

\end{lem}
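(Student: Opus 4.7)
The plan is to conduct the entire analysis in the $Z$-plane after the substitution $Z = F(z)$. Because $F'(z) = 1/f(z) = A(z)$, this substitution transforms $\dot z = f(z)$ into $\dot Z = 1$, so the images of trajectories in $Z$-coordinates are horizontal lines $Z(t) = Z_0 + t$ traversed at unit speed. By Lemma \ref{lemwv}, $F$ maps $V_r$ univalently onto the slit annulus $\{ Z \in \C : S_r < |Z| < T_r,\ 0 < \arg Z < 2\pi \}$, sending $I_r$, $J_r$ and the two boundary sub-trajectories of $V_r$ respectively onto the inner circle $|Z| = S_r$, the outer circle $|Z| = T_r$, and the two edges of the slit $[S_r, T_r]$.

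Fix $w \in I_r$ and write $Z_0 = F(w) = a + ib$, so that $a^2 + b^2 = S_r^2$. In the principal case $a = \mathrm{Re}\, F(w) \geq 0$, I would let $t_w$ be the first $t > 0$ with $|Z_0 + t| = T_r$; explicitly $t_w = \sqrt{T_r^2 - b^2} - a$. The triangle inequality gives
\[
t_w = |Z(t_w) - Z_0| \geq |Z(t_w)| - |Z_0| = T_r - S_r,
\]
which is the stated lower bound. Since $\mathrm{Im}\, Z(t) = b$ is constant along the trajectory, $Z(t)$ can only leave the slit annulus through its inner or outer circle, so for $0 < t < t_w$ the preimage $\zeta_w(t)$ lies in $W_r \setminus (I_r \cup J_r)$ (with the degenerate case $b = 0,\, a = S_r$ giving a trajectory which traces one of the boundary sub-trajectories of $V_r$, still in $W_r$), while $\zeta_w(t_w) \in J_r$. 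If in addition $a > 0$, differentiating $|Z_0 + t|^2 = (a+t)^2 + b^2$ at $t = 0$ yields the value $2a > 0$, so $|Z(t)| < S_r$ for small negative $t$, i.e.\ $|F(\zeta_w(t))| < S_r$.

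The case $\mathrm{Re}\, F(w) \leq 0$ is then handled by running time backwards: the same triangle inequality applied to the first negative $t$ for which $|Z(t)| = T_r$ yields $|t_w| \geq T_r - S_r$, hence $t_w \leq S_r - T_r$, and the sign argument for small positive $t$ follows from the fact that, under the strict hypothesis $\mathrm{Re}\, F(w) < 0$, the derivative of $|Z(t)|^2$ at $t = 0$ equals $2a < 0$. Finally, when $\mathrm{Re}\, F(w) = 0$ we have $a = 0$ and $b = \pm S_r$; then the horizontal line $Z(t) = t + ib$ remains strictly in the open upper or lower half-plane for every real $t$ and crosses $|Z| = T_r$ at $t = \pm\sqrt{T_r^2 - S_r^2}$, yielding the two-sided conclusion. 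No real obstacle arises; the only point that needs attention is verifying that the trajectory cannot exit $V_r$ across a boundary sub-trajectory before meeting $J_r$, which is transparent in $Z$-coordinates because those sub-trajectories correspond to the set $\mathrm{Im}\, Z = 0$ and the imaginary part of $Z(t)$ is preserved by the flow $\dot Z = 1$.
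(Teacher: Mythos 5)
Your proposal is correct and follows essentially the same route as the paper: both pass to the coordinate $Z = F(z)$, in which the flow becomes $\dot Z = 1$, and then observe that along the horizontal line $Z(t) = F(w) + t$ the quantities ${\rm Re}\, Z$ and $|Z|$ increase (for ${\rm Re}\, F(w) \geq 0$ and $t>0$) while ${\rm Im}\, Z$ is constant, so the trajectory stays in the slit annulus until it exits through $|Z| = T_r$, with the lower bound $t_w \geq T_r - S_r$ coming from the triangle inequality exactly as in the paper's computation $F(W)-F(w) = |F(W)-F(w)| \geq T_r - S_r$. Your version merely makes the paper's qualitative statements explicit (the formula for $t_w$ and the derivative of $|Z(t)|^2$), which is a harmless elaboration rather than a different method.
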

\begin{proof} Let $w \in I_r$ 
and ${\rm Re} \, F(w) \geq 0$. Then $|F(w)| = S_r$ and, for small positive $t$, both of 
${\rm Re} \, F(\zeta_w(t))$ and $|F(\zeta_w(t))|$  are increasing, while 
${\rm Im} \, F(\zeta_w(t))$ is constant; thus $\zeta_w(t)$ remains within $W_r$ until it exits via $J_r$. 
The time taken to pass from $w$ to the first encounter with $J_r$, at $W$ say, is 
$F(W) - F(w) = | F(W)-F(w) | \geq T_r - S_r$.
The remaining assertions are proved similarly.
\end{proof}

\begin{definition}
 \label{def0}
For $u \in \C$ 
let $u^*$ denote the reflection of $u$ across the imaginary axis. 
A point $w \in I_r$ will be called \textit{recurrent} if 
${\rm Re} \, F(w) < 0$ and there exists $t' > 0$ such that: (i) $\zeta_w(t)$ is defined for $0 \leq t \leq t'$
and  $w' = \zeta_w(t') \in I_r$; (ii) $F(w') = F(w)^*$; (iii) 
$\zeta_w(t) \not \in  L_r $ for $0 < t < t'$; (iv) the Jordan curve $\Gamma_w$, formed from  the arc of 
$I_r$ joining $w$ to $w'$ and the sub-trajectory $\zeta_w(t)$, $0 \leq t \leq t'$, encloses no zeros and no poles of $f$. 

\end{definition}
Since $F$ is univalent on $V_r$, and maps $I_r$ onto the circle $|w| = S_r$, with the end-points of $I_r$ mapped to $S_r$, it follows that
for $w, w' \in I_r$ the equation $F(w') = F(w)^*$ determines $w'$ uniquely from $w$, except when $F(w) = - S_r$.
The next lemma follows at once from Lemma \ref{lemwv} and Cauchy's theorem applied to $1/f$ and $\Gamma_w$. 
\begin{lem}
 \label{lemnever}
If $w \in I_r$ is recurrent then $t' \leq | F(w') - F(w) | \leq 2 S_r = P_r$. 
\end{lem}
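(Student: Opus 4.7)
The plan is to combine two facts: the general identity $\int_\Lambda dz/f(z) = \int dt$ along any sub-trajectory $\Lambda$ of \eqref{flow1} (which follows immediately from $dz/dt = f(z)$, and makes sense because trajectories avoid zeros of $f$ and, on the closed time-interval $[0,t']$, also avoid the poles where $f=\infty$), together with the fact that $F$ is an analytic primitive of $1/f$ on $V_r$ by construction \eqref{Nlim5}. The strategy is to apply Cauchy's theorem to $1/f$ on the Jordan curve $\Gamma_w$ to convert the time taken along the (possibly excursive) trajectory portion of $\Gamma_w$ into a simple evaluation of $F$ along the sub-arc of $I_r$.

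Concretely, by part (iv) of Definition \ref{def0}, the interior of $\Gamma_w$ contains no zeros of $f$ (and $1/f$ is automatically holomorphic at poles of $f$), so
$$
\oint_{\Gamma_w} \frac{dz}{f(z)} = 0.
$$
The trajectory piece $\zeta_w([0,t'])$ contributes $\int_0^{t'} dt = t'$, and the sub-arc of $I_r$ from $w$ to $w'$ lies in $\overline{V_r}$ where $F' = 1/f$, so its contribution is $\pm(F(w')-F(w))$, with sign determined by orientation. Cauchy's theorem then gives $t' = \pm(F(w')-F(w))$. Condition (ii) says $F(w') = F(w)^* = -\overline{F(w)}$, so
$$
F(w')-F(w) = -2\,{\rm Re}\,F(w),
$$
which is a positive real number by condition (i). The unknown sign is then forced by $t'>0$, yielding $t' = -2\,{\rm Re}\,F(w) = |F(w')-F(w)|$. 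Since $w\in I_r$ means $|F(w)|=S_r$, one has $|{\rm Re}\,F(w)| \leq S_r$, and therefore
$$
t' = |F(w')-F(w)| = 2|{\rm Re}\,F(w)| \leq 2S_r = P_r,
$$
as required.

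The only point needing genuine care is the orientation bookkeeping on $\Gamma_w$, and the assertion that the trajectory integral $\int dz/f$ is well defined along all of $\zeta_w([0,t'])$ even though this arc may leave $V_r$; the latter holds because $\zeta_w$ cannot hit any zero of $f$ and, on a finite closed time interval with $\zeta_w(t')\in I_r\subset\C$ finite, cannot hit a pole either. Everything else is a direct and essentially routine consequence of Lemma \ref{lemwv} (which supplies $F'=1/f$ on $V_r$), the definition of recurrence, and Cauchy's theorem.
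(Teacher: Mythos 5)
Your proof is correct and is precisely the argument the paper intends: its one-line proof says the lemma "follows at once from Lemma \ref{lemwv} and Cauchy's theorem applied to $1/f$ and $\Gamma_w$", and you have simply written out the details (the trajectory piece contributes $t'$, the arc of $I_r$ contributes $\pm(F(w')-F(w))$ since $F'=1/f$ on $D(z_r,4)$, and $F(w')-F(w)=-2\,{\rm Re}\,F(w)\in(0,2S_r]$ because $|F(w)|=S_r$). Your side remarks on orientation, on the trajectory possibly leaving $V_r$, and on $1/f$ being holomorphic at poles of $f$ are all accurate and do not change the argument.
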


\begin{lem}
 \label{lemrecur}
If $w \in I_r$ with ${\rm Re} \, F(w) < 0$ and $F(w)$ close to $ \pm i S_r$, then
$w$ is recurrent. 
\end{lem}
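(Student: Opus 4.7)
The plan is to work in the $Z = F(z)$ coordinate supplied by Lemma \ref{lemwv}: on $D(z_r, 4)$ the map $F$ is single-valued, holomorphic, and nowhere zero (since $f$ has no zeros or poles there), and $\dot Z = 1$ along any trajectory, so $Z(t) = F(w) + t$ for as long as $\zeta_w(t)$ remains in $D(z_r, 4)$. Write $F(w) = S_r e^{i\theta_0}$ using the branch that identifies $\theta_0 \in (0, 2\pi)$ with the sheet $V_r$. The hypothesis ${\rm Re}\, F(w) < 0$ forces $\theta_0 \in (\pi/2, 3\pi/2)$, and ``$F(w)$ close to $\pm i S_r$'' means $\theta_0$ lies in a small fixed neighbourhood of $\pi/2$ or $3\pi/2$.

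A direct computation shows that the horizontal segment $Z(t) = F(w) + t$ re-meets $|Z| = S_r$ precisely at $t' = -2 S_r \cos\theta_0 > 0$, with $Z(t') = F(w)^*$; throughout $[0, t']$ it stays in $\{|Z| \leq S_r\}$ with ${\rm Im}\, Z = S_r \sin\theta_0$ constant. Taking $Y(t) = \log Z(t)$ continuously from $Y(0) = \log S_r + i\theta_0$, the fact that $\sin\theta_0$ is bounded away from $0$ gives $|Y(t) - \log S_r| = O(1)$ on $[0, t']$, with ${\rm Im}\, Y(t)$ confined to a small neighbourhood of $\pi/2$ or $3\pi/2$ inside $(0, 2\pi)$.

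To confirm that $\zeta_w$ stays in $D(z_r, 4)$ throughout $[0, t']$, apply (\ref{b7a}) to write $u = \log(\zeta_w(t)/z_r) = (Y(t) - c)/(1 - N) + o(1/N)$; the explicit forms of $S_r$, $T_r$ and $c$ give $|\log S_r - c| = N^{1/4} + O(1)$, hence $|u(\zeta_w(t))| = O(N^{-3/4})$, well inside the bound $4 N^{-5/8}$ defining $D(z_r, 4)$, and a bootstrap continuity argument then rules out escape. It remains to check Definition \ref{def0}: setting $w' := \zeta_w(t')$, condition (ii) $F(w') = F(w)^*$ holds by construction; condition (iii) follows because $|F(\zeta_w(t))| < S_r$ on $(0, t')$ while $L_r \subseteq F^{-1}(\{|Z| = S_r\})$; for (i) and $w' \in I_r$, ${\rm Im}\, Y(t') \in (0, 2\pi)$ places $w'$ in $L_r \cap \partial V_r = I_r$; and (iv) holds because $\Gamma_w$ and its bounded complementary region both lie in $D(z_r, 4)$ (by the $|u| = O(N^{-3/4})$ bound, extended to the interior via the maximum modulus principle applied to the holomorphic function $u$), on which $f$ has no zeros or poles by (\ref{Nlim4}).

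The principal obstacle is the sheet identification: the continuous lift $Y(t)$ must remain in the strip $0 < {\rm Im}\, Y < 2\pi$, so that $w'$ lands on $I_r$ rather than on the part of $L_r$ bordering an adjacent sheet $V_r^{j \pm 1}$. This is precisely where the hypothesis ``$F(w)$ close to $\pm i S_r$'' is essential: it confines ${\rm Im}\, Y(t)$ to a small neighbourhood of $\pi/2$ or $3\pi/2$, safely away from both endpoints $0$ and $2\pi$ of the strip.
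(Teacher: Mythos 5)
Your proof is correct and follows essentially the same route as the paper's: both track the horizontal chord $Z(t)=F(w)+t$ from $F(w)$ to $F(w)^*$ inside $\{|Z|\le S_r\}$, and both use the nearness of $F(w)$ to $\pm iS_r$ to confine the whole excursion to a small region where $F$ (equivalently $\log Z$, via (\ref{b7a})) is univalent, so that the return point lands on $I_r$ with $F(w')=F(w)^*$ and the resulting Jordan curve encloses no zeros or poles. The paper phrases the confinement qualitatively as ``$w$ lies in a small neighbourhood $\widehat U$ of some $\widehat w\in I_r$ with $F(\widehat w)=\pm iS_r$ and $F$ univalent on $\widehat U$,'' whereas you make the quantitative bound $|\log(\zeta_w(t)/z_r)|=O(N^{-3/4})$ explicit; the content is the same.
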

\begin{proof}
By the construction of $V_r$, the point $w$ lies in a small neighbourhood $\widehat U$ of some $\widehat w \in I_r$ with $F\left(\widehat w \right) = \pm i S_r$ and 
$F$ univalent on $\widehat U$.
Hence, 
as $\zeta$ describes $\zeta_w$ in increasing time, the image $F(\zeta)$ traverses the horizontal chord from  $F(w)$ to $F(w)^*$
and $\zeta$ remains within $\widehat U$; thus
$\zeta $ returns to meet $ L_r$ at
$w' \in I_r$ with $F(w') = F(w)^*$. 
Therefore $w $ is recurrent.
\end{proof}

Lemma \ref{lemrecur} implies that the set of recurrent $w \in I_r$ is non-empty, and it follows from the next lemma 
that, for all but at most two $V_r^j$, the absence of $Y_j$ as in the conclusion of Proposition \ref{prop1}
forces all $v \in I_r$ with ${\rm Re} \, F(v) < 0$ to be recurrent.

\begin{lem}
 \label{Jrclosed}
Let $V_r = V_r^j$  be such that neither end-point of the arc $L_r$ lies in $W_r$, and assume that 
no $y \in I_r$  is such that ${\rm Re} \, F(y) < 0$ and $\zeta_y$ 
has maximal interval of definition $(\alpha_y, \beta_y)$ with $\beta_y \leq P_r = 2S_r$. Then the following statements hold.\\
(a) Let $w \in I_r$ be such that ${\rm Re} \, F(w) < 0$  and there exists a sequence $(w_n)$ in $I_r$ for which
$w_n \to w$ as $n \to \infty$ and each $w_n$ is recurrent.
Then $w$ is recurrent and $w_n' \to w'$ as $n \to \infty$. \\
(b) All $v \in I_r$ with ${\rm Re} \, F(v) < 0$ 
are recurrent. 
\end{lem}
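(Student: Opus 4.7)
The plan is to prove (a) by compactness and continuous dependence of the flow on initial conditions, and then to deduce (b) by showing the set $S := \{w \in I_r : \mathrm{Re}\, F(w) < 0 \text{ and } w \text{ is recurrent}\}$ is both open and closed in the connected set $I_r^- := \{w \in I_r : \mathrm{Re}\, F(w) < 0\}$, with non-emptiness supplied by Lemma~\ref{lemrecur}.

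For part (a), since $\mathrm{Re}\, F(w_n) < 0$ for all large $n$, the standing hypothesis of the lemma gives $\beta_{w_n}, \beta_w > P_r$, while Lemma~\ref{lemnever} gives $t_n' \leq P_r$. I would then pass to a subsequence with $t_n' \to t' \in [0, P_r]$ and apply continuous dependence of $\dot z = f(z)$ on initial data (applicable because $f$ has no zeros or poles on $D(z_r, 8)$ by the Wiman--Valiron setup of Proposition~\ref{prop1}) to obtain $\zeta_{w_n} \to \zeta_w$ uniformly on $[0, P_r]$, hence $w_n' \to w' := \zeta_w(t')$. Continuity of $F$ on $\overline{I_r}$ gives $F(w') = \lim F(w_n)^* = F(w)^*$, and since $\mathrm{Re}\, F(w)^* > 0$ this value lies in the open right semicircle of $\{|Z| = S_r\}$, placing $w' \in I_r$. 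Property (iii) would then follow from the $Z$-plane picture: $Z(t) := F(\zeta_w(t))$ satisfies $\dot Z = 1$ and traces the horizontal chord of $\{|Z| = S_r\}$ from $F(w)$ to $F(w)^*$, lying strictly inside that circle for $0 < t < t'$ and therefore avoiding $L_r$. Property (iv) would follow from Hausdorff convergence $\Gamma_{w_n} \to \Gamma_w$ together with continuity of the winding number about any fixed zero or pole of $f$, since each $\Gamma_{w_n}$ encloses none.

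For part (b), $I_r^-$ is connected, being the preimage under the continuous surjection $F|_{I_r}$ of the open semicircle $\{|Z|=S_r,\ \mathrm{Re}\, Z < 0\}$; by Lemma~\ref{lemrecur} and part (a), $S$ is a non-empty closed subset of $I_r^-$. To show $S$ is open, I would fix $w \in S$ and, for $\tilde w \in I_r^-$ close to $w$, set $\tilde t := -2\,\mathrm{Re}\, F(\tilde w)$ and $\tilde w' := \zeta_{\tilde w}(\tilde t)$. Continuous dependence keeps $\zeta_{\tilde w}$ close to $\zeta_w$ on $[0, \tilde t]$, so $\tilde w'$ lies close to $w' \in I_r$, the identity $F(\tilde w') = F(\tilde w)^*$ again follows from $\dot Z = 1$, and conditions (iii) and (iv) are inherited from $w$ by perturbation. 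Hence $S$ is open, and connectedness of $I_r^-$ yields $S = I_r^-$. The main delicate point is verifying that the representation $Z(t) = F(w) + t$ really does hold throughout $[0, t']$, which requires the trajectory to remain in the domain $D(z_r, 4)$ on which $F$ is defined; this is guaranteed because in $Z$-coordinates the trajectory stays inside $\{|Z| \leq S_r\}$, comfortably within the image of $D(z_r, 4)$ under $F$.
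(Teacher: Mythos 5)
There is a genuine gap, and it sits at the heart of your argument: the claim that $Z(t)=F(\zeta_w(t))$ ``traces the horizontal chord of $\{|Z|=S_r\}$ from $F(w)$ to $F(w)^*$'', justified at the end by the assertion that the disc $\{|Z|\leq S_r\}$ lies ``comfortably within the image of $D(z_r,4)$ under $F$''. That assertion is false. By (\ref{b7}), $|F(z)|\sim |z/r|^{1-N}T_r$, so the image of $D(z_r,4)$ under $F$ omits an entire neighbourhood of $Z=0$ (roughly $|Z|<T_r e^{-4N^{3/8}}$); the point $Z=0$ corresponds to $z=\infty$ in the local model. When $F(w)$ is close to $-S_r$, the chord from $F(w)$ to $F(w)^*$ passes arbitrarily close to $Z=0$, so the trajectory is forced out of $D(z_r,4)$ \emph{before} the chord is completed; outside $D(z_r,4)$ the relation $\dot Z=1$ no longer holds and the trajectory may hit a pole or escape to infinity. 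This is not a repairable technicality: if the chord argument were valid for every $w\in I_r$ with ${\rm Re}\,F(w)<0$, then every such $w$ would be recurrent \emph{unconditionally}, the hypothesis of Lemma \ref{Jrclosed} would be vacuous, and Lemma \ref{Jrclosed3} would then yield an outright contradiction rather than the existence of an escaping trajectory --- i.e.\ your argument proves too much and would collapse the paper's main construction. (The trajectory through the point with $F(w)=-S_r$ is exactly the one that, in the model $\dot z=cz^N$, blows up in time $S_r$.) The same flaw reappears in your openness argument for part (b), where you again set $\tilde t=-2\,{\rm Re}\,F(\tilde w)$ and invoke $\dot Z=1$ throughout $[0,\tilde t]$.

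The paper's proof is structured precisely to avoid this. It never assumes the trajectory stays in $D(z_r,4)$: the existence of a return to $L_r$ within time $2S_r$ is extracted from the recurrence of the approximating points $w_n$ via the alternative (\ref{notIr}); the fact that the \emph{first} return point $W$ satisfies ${\rm Re}\,F(W)>0$ and $F(W)=F(w)^*$ is obtained by applying Lemma \ref{lemexit} backwards in time on whichever $V_r^k$ the trajectory meets; and the identity $F(w')=F(w)^*$ for nearby points is recovered from Cauchy's theorem applied to $1/f$ around $\Gamma_w$ (condition (iv)), not from a global chord picture. Your compactness step (extracting $t'=\lim t_n'$ and $w'=\zeta_w(t')$) is a reasonable start and does give (i) and (ii) when $F(w)\neq -S_r$, but it does not give condition (iii), nor does it give $w_n'\to w'$ along the full sequence in the case $F(w)=-S_r$ (where $F(w')=S_r$ has two preimages on $I_r$) --- and that is exactly the case exploited in Lemma \ref{Jrclosed3}. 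To close these gaps you need something like the paper's first-return analysis via Lemma \ref{lemexit} together with the non-tangential-crossing argument; the connectedness framework you propose for (b) is fine in outline, but its openness step must also be run through Cauchy's theorem rather than through $\dot Z=1$.
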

\begin{proof} Let $w$ be as in (a), and observe that $F(w)^* \neq F(w)$, since  ${\rm Re} \, F(w) < 0$, and that
$\zeta_w(t) \not \in L_r $ for small positive~$t$, by  
Lemma \ref{lemexit}. 
By assumption, $\zeta_w$ 
has maximal interval of definition $(\alpha_w, \beta_w)$ with $\beta_w > 2S_r$.

Suppose first that there exists $\delta $ such that  
\begin{equation}
 \label{notIr}
|\zeta_w(t) - u| \geq 2 \delta > 0 \quad \hbox{for all $u \in I_r$ with $F(u) = F(w)^*$ and  all $t \in [0 ,  2 S_r]$. } 
\end{equation}
Note here that there exist at most two $u \in I_r$ with $F(u) = F(w)^*$. 
Since $w_n \to w$ and $w_n$ is recurrent
it follows that $F(w_n') = F(w_n)^* \to F(w)^*$, and so $w_n'$, for each large $n$, is close to some 
$u \in I_r$ with $F(u) = F(w)^*$.  But  (\ref{notIr}) and
continuous dependence on starting conditions now imply that if $n$ is large then 
\begin{equation*}
 \label{notnear}
| \zeta_{w_n}(t) - w_n'| \geq \delta \quad \hbox{for $0 \leq t \leq 2S_r$. }
\end{equation*}
This contradicts the fact that 
Definition \ref{def0} and
Lemma~\ref{lemnever} give $w_n' = \zeta_{w_n} (t_n') $, where $ 0 < t_n' \leq 2S_r$.  
Hence (\ref{notIr}) cannot hold, and  there exists a minimal  $s $ with 
\begin{equation}
 \label{that1}
0 < s \leq 2S_r, \quad W= \zeta_w(s) \in L_r  ,
\end{equation}
because if this is not the case then (\ref{notIr}) evidently holds for some choice of  $\delta $. 

Suppose that ${\rm Re} \, F(W) \leq 0$, and take $k$ (possibly with  $k \neq j$) such that $W \in \partial V_r^k$.
Since $s \leq 2S_r < T_r - S_r$, applying 
Lemma~\ref{lemexit} to this $V_r^k$ shows that $w = \zeta_w(0) = \zeta_{W} \left( \, - s \, \right) \not \in L_r$, a contradiction.

Thus $W = \zeta_w( s)  \in L_r$ and ${\rm Re} \, F(W)$ is positive.
Suppose that $W \not \in I_r$ or $F(W) \neq F(w)^*$, and take any $u \in I_r$ with $F(u) = F(w)^*$. Then Lemma \ref{lemexit} (applied possibly to a different
$V_r^k$) and the minimality of $s$ in (\ref{that1}) give
$\zeta_w(t) \neq u$ 
for $0 \leq t \leq x = s + T_r - S_r $. Since $x > 2S_r$ there must exist $\delta $ such that (\ref{notIr}) holds, which is impossible.
This proves that $W = \zeta_w( s)  \in I_r$ and $F(W) = F(w)^*$, so that $w$ satisfies conditions (i) to (iii) of Definition \ref{def0},
with $t' = s$ and $w' = W$.

Now take any sequence $(x_n) $ in $I_r$ with $x_n \to w$ as $n \to \infty$. 
The trajectory $\zeta_w $ meets $L_r$ non-tangentially at $w$ and $W$, because $|F(z)| = S_r$ on $L_r$ and $Z = F(z)$ gives
$\dot Z = 1$ locally. 
Take a small positive $\rho $ 
and let $n \in \N$ be large. Then $\zeta_w(t)$ does not meet $L_r$ for $\rho \leq t \leq s - \rho $, 
by the minimality of $s$  in (\ref{that1}), 
and nor does $\zeta_{x_n}(t)$, by continuous dependence on 
initial conditions. Moreover, for $0 \leq t \leq \rho$, the trajectory
$\zeta_{x_n}(t)$ follows a level curve on which ${\rm Im} \, F$ is constant, from $x_n$ to 
$\zeta_{x_n}(\rho)$, in which $F(\zeta_{x_n}(\rho)) = F(x_n) + \rho $. Furthermore,
$\zeta_{x_n} (s - \rho )$ is close to $\zeta_w( s - \rho ) $, which satisfies
$F(\zeta_w( s - \rho )) = F( W) - \rho $. Thus for $t - s + \rho $ small and positive, $\zeta_{x_n}(t)$ again follows a level curve
of ${\rm Im} \, F$, meeting $L_r$ non-tangentially at some point $x_n''$ near to $W$, using the fact that $W$ is not an end-point of $L_r$. 
Therefore $\zeta_{x_n} $
follows close to $\zeta_w$ and returns for the first time to $L_r$ at $x_n''$.

Applying this argument with $x_n = w_n$ shows that  $w_n' = x_n'' \to W = w'$, and that if $\Gamma_w$ is as in Definition \ref{def0} then, for large $n$,
each point of $\Gamma_{w_n}$ lies close to  $\Gamma_w$. Thus 
$w$ also satisfies condition (iv), and is recurrent. This proves part  (a).

To prove part (b), observe that  $I_r$ has relatively open subsets $U^+$, $U^-$, mapped by $\arg F(z)$ onto 
$(\pi/2, \pi )$ and $(-\pi, - \pi/2)$ respectively. Let $U_0$ be one of $U^+$, $ U^-$; then 
$\widehat U_0 = \{ w \in U_0: \hbox{$w$ is recurrent} \} \neq \emptyset $, by Lemma \ref{lemrecur}. 
Suppose that $U_0 \neq \widehat U_0$. Then 
there exists some $v \in U_0$ which is a boundary point of $\widehat U_0$ relative to $U_0$; thus 
$v \in I_r$ with ${\rm Re} \, F(v) < 0$ and $F(v) \not = - S_r$ 
and there are sequences $w_n \to v$, $v_n \to v$, with $w_n, v_n \in I_r$, such that
each $w_n$ is recurrent, while each $v_n$ is not. 
By (a), $v$ is recurrent. For large $n$ the   argument in the proof of (a), with $x_n = v_n$,  $w=v$ and $W = v'$,
shows that $\zeta_{v_n}$  returns to meet $ L_r$ for the first time after leaving $v_n$, 
at some  $u_n = x_n'' \in I_r$ close to $v'$, without looping around any zeros or poles of $f$. 
But then Cauchy's theorem gives 
${\rm Im} \, (F(u_n) - F(v_n)) = 0$ and 
$F(u_n) = F(v_n)^*$, so that $v_n$ is recurrent, a contradiction. Hence all $v \in I_r$ with ${\rm Re} \, F(v) < 0$ and $F(v) \neq -S_r$
are recurrent, and the same holds when $F(v) = -S_r$, by  part (a). 
\end{proof}

\begin{lem}
 \label{Jrclosed3}
Let $V_r = V_r^j$  be such that neither end-point of the arc $L_r$ lies in $W_r$. Then there exists  $y \in I_r$ such that ${\rm Re} \, F(y) < 0$ and $\zeta_y$ 
has maximal interval of definition $(\alpha_y, \beta_y)$ with $\beta_y \leq P_r = 2S_r$.
\end{lem}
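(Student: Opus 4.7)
My plan is proof by contradiction: if no such $y$ existed, Lemma~\ref{Jrclosed}(b) would make every $v\in I_r$ with ${\rm Re}\,F(v)<0$ recurrent, and Lemma~\ref{Jrclosed}(a) would force the return map $v\mapsto v'$ to be continuous on this set. I will exhibit a single point at which this continuity must fail, namely the point of $I_r$ with $F$-value $-S_r$.

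First I set up the parametrization of $I_r$. Since $F$ is analytic on $D(z_r,4)$, univalent on $V_r$, and $\overline{V_r}$ is compact in $D(z_r,4)$, $F$ extends to a homeomorphism of $\overline{V_r}$ onto the closed slit annulus. In particular $F|_{I_r}$ is a homeomorphism of the arc $I_r$ onto the inner circle $|w|=S_r$, and the two endpoints $a_-,a_+$ of $I_r$ are distinct $z$-points, both mapped to $w=S_r$; distinctness follows from the univalence of $\log Z$ on $D(z_r,7/2)$ asserted in Lemma~\ref{lemwv}. Parametrize by $\alpha\in[0,2\pi]\mapsto v(\alpha)$ via $F(v(\alpha))=S_re^{i\alpha}$, with $v(0)=a_-$ and $v(2\pi)=a_+$. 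Then $\{v\in I_r:\,{\rm Re}\,F(v)<0\}$ is the sub-arc with $\alpha\in(\pi/2,3\pi/2)$, and recurrence gives $v'(\alpha)\in I_r$ satisfying $F(v'(\alpha))=F(v(\alpha))^*=S_re^{i(\pi-\alpha)}$.

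By Lemma~\ref{Jrclosed}(a) the map $\alpha\mapsto v'(\alpha)$ is continuous on $(\pi/2,3\pi/2)$; in particular it is continuous at $\alpha=\pi$, since $v(\pi)$ is itself recurrent (${\rm Re}\,F(v(\pi))=-S_r<0$). However, I can compute the two one-sided limits directly. For $\alpha\in(\pi/2,\pi)$ the $I_r$-parameter of $v'(\alpha)$ is $\beta=\pi-\alpha\in(0,\pi/2)$, tending to $0^+$ as $\alpha\to\pi^-$, so $v'(\alpha)\to v(0)=a_-$. For $\alpha\in(\pi,3\pi/2)$ the parameter reduced mod $2\pi$ is $\beta=3\pi-\alpha\in(3\pi/2,2\pi)$, tending to $2\pi^-$ as $\alpha\to\pi^+$, so $v'(\alpha)\to v(2\pi)=a_+$. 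Since $a_-\ne a_+$, continuity fails at $\alpha=\pi$, which is the desired contradiction.

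The only point that requires some care in turning this sketch into a proof is verifying that the extension of $F$ to $\overline{V_r}$ genuinely distinguishes $a_-$ from $a_+$ in $z$-space, so that the topological obstruction is real; once this is granted, the argument is simply an unwinding of the reflection $F\mapsto F^*$ applied to the parametrization of the inner circle $|w|=S_r$ and its preimage $I_r$.
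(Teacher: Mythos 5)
Your argument is correct and is essentially the paper's own proof: both derive a contradiction by applying Lemma~\ref{Jrclosed}(a) at the point of $I_r$ with $F$-value $-S_r$, where the return map $w \mapsto w'$ would have to be continuous yet its two one-sided limits are the two distinct endpoints of $I_r$ (the paper phrases this by choosing a single sequence $v_n$ tending to the endpoint $u_2 \neq w'$ rather than computing both limits). The point you flag as needing care, that the two endpoints are genuinely distinct in the $z$-plane, is indeed the crux, and is justified exactly as you say by the univalence of $\log Z$ from Lemma~\ref{lemwv}.
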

\begin{proof} Assume that this is not the case, and consider the unique $w \in I_r$ with $F(w) = -S_r$. Then Lemma \ref{Jrclosed}
shows that $w$ is recurrent, and so $w'$ is one of the two
points $u_1, u_2$ on $I_r$ with $F(u_j) = S_r$; label these so that $w' = u_1$. 
Choose a sequence $v_n \in I_r$ with $v_n \to u_2$, $ v_n \neq  u_2$, and for large $n$ choose the unique $w_n \in I_r$ with $F(w_n) = F(v_n)^* \to F(u_2)^* = -S_r$.
Thus $w_n \to w, w_n \neq w$, and $w_n$ is 
recurrent for large $n$, by Lemma \ref{Jrclosed}. But this gives 
$w_n' = v_n \to u_2 \neq  w'$,  contradicting Lemma \ref{Jrclosed}.
\end{proof}

It follows from Lemma \ref{Jrclosed3} that, for large $r \in F_1$, 
at least $2Q - 2 \geq Q \geq N^{1/4}$ of the domains  $ V_r^1, \ldots , V_r^{2Q}$  give rise to pairwise distinct $ Y_j \in \partial V_r^j \cap L_r$ such that  
${\rm Re} \, F(Y_j) < 0$ and $\zeta_{Y_j}$ 
has maximal interval of definition $(\alpha_{Y_j}, \beta_{Y_j})$, in which $\beta_{Y_j} $ satisfies (\ref{betajdef}). 
Suppose that 
these trajectories are not pairwise disjoint. Then there exist distinct $j$ and $k$ such that $Y_j = \zeta_{Y_k}(S) $ and $Y_k = \zeta_{Y_j}(-S) \in L_r$ for some $S$ 
with $0 < S < P_r$. But Lemma~\ref{lemexit} shows that 
$\zeta_{Y_j}(t) \not \in L_r$ for $S_r - T_r < t < 0$, and $T_r - S_r > 2 S_r = P_r$, a contradiction. 
Proposition \ref{prop1} is proved.

\section{Proof of Theorem \ref{thm0}}\label{nonzero}

Let $f$ be a transcendental meromorphic function in the plane with finitely many  poles. 
Write  $ f = B/C$, where
$B$ is a transcendental entire function and $C $ is a polynomial, having no zeros in common with $B$.
The Wiman-Valiron theory \cite{Hay5} may now be applied to $B$ as follows. Starting from the Maclaurin series  $B(z) = \sum_{k=0}^\infty b_k z^k $ of $B$,
the central index  $N(r) = \nu(r, B)$ is defined for $r \geq 0$ to be the largest integer $n $ such that
$|b_n| r^n = \max_{k} |b_k| r^k $, and $N(r)$ tends to infinity with $r$.
For large $r > 0 $ choose $z_r$ with 
$|z_r| = r$ and 
$ |B(z_r)| = M(r, B) = \max \{ |B(z)| : |z| = r \} $. 
Then  \cite[Theorem 10]{Hay5} gives  $F_1 \subseteq [1, \infty)$ 
such that $[1, \infty) \setminus F_1$ has finite logarithmic measure and 
\begin{equation*}
\frac{f(z)}{f(z_r)} \sim \frac{B(z)}{B(z_r)}
\sim   \left( \frac{z}{z_r} \right)^{N(r)}   \quad \hbox{on} \quad  D(z_r, 8), 
\end{equation*}
as $r \to \infty$ in $F_1$, where $D(z_r, 8)$ is given by (\ref{Nlim1}).

Now Lemma \ref{limptlemmero} and 
Proposition \ref{prop1} give an arbitrarily large number of pairwise disjoint trajectories for (\ref{flow1}), 
each tending to infinity or a pole of $f$ in finite
increasing time. 
But each of the finitely many poles of $f$ has only finitely many
trajectories tending to it in increasing time (see Section \ref{prelim}).
This proves Theorem \ref{thm0}.

It seems conceivable that  the conclusion of Theorem \ref{thm0} would remain true for all meromorphic functions $f$ in the plane
such that the inverse function $f^{-1}$ has a direct transcendental
singularity over $\infty$  \cite{BE}. This is a weaker hypothesis than those of Theorems \ref{thm0} and \ref{thm2}, and means that
there exist $M > 0$ and a component $U$ of the set 
$\{ z \in \C : |f(z)| > M \}$ which contains no poles of $f$, but does contain a path tending to infinity on which $f(z)$ tends to infinity.
In this case, Theorems 2.1 and 2.2 of \cite{BRS} give $F_1$ and $N(r)$ such that (\ref{Nrdef}) and (\ref{Nlim4}) are satisfied, where $|z_r| = r$,
$D(z_r, 8) \subseteq U$ and 
$\log r = o( \log^+ |f(z_r)| )$ as $r \to \infty$ in $ F_1$, 
while $[1, \infty) \setminus F_1$ has finite logarithmic measure. Thus Proposition \ref{prop1} may be applied, with $P_r  \to 0$ 
as $r \to \infty$ in $ F_1$, by (\ref{betajdef}), 
but in general it seems difficult to exclude the 
possibility that all the trajectories $\zeta_{Y_j}$ thereby obtained  tend to  poles of $f$. It is true, however, that if such a trajectory does tend to a pole then 
it must exit $U$ and subsequently enter another component $U_r$ of $\{ z \in \C : |f(z)| > M \}$, giving rise to an interval 
$[t_1, t_2] \subseteq (0, \beta_{Y_j}) \subseteq (0, P_r)$ on which $|f(\zeta_{Y_j} (t))| \leq M$, with
$\zeta_{Y_j} (t_1) \in \partial U$ and $\zeta_{Y_j} (t_2) \in \partial U_r$.
Hence the distance from $U$ to $U_r$ is at most $M(t_2 - t_1) \leq 
M P_r $, which for large $r \in F_1$ is extremely small. 
Such a component $U_r$ cannot exist if, for example, $f(z) = g(z) \tan z$, where $g$ is a transcendental entire function which is bounded on the  strip
$\{ z \in \C : \, | {\rm Im} \, z | \leq T \}$, for some $T > 0$; in this case $f^{-1}$ has a direct transcendental
singularity over $\infty$  and (\ref{flow1}) has infinitely many 
trajectories tending to infinity in finite increasing time.

\section{Proof of Theorem \ref{thm1}}
Let $f $, $\Omega$, $M$, $U$, $F$ and $H$ be as in the hypotheses. 
It may be assumed that $M=1$: if this is not the case then (\ref{flow1}) and  $\Omega$ may be re-scaled by writing
$w = z/M$ and $\dot w = f(z)/M = g(w)$. 
Let $z = \phi (v)$ be the inverse function of $F$, mapping
$H = \{ v \in \C : {\rm Re}\, v > 0 \}$ univalently onto  $ U$, and on $H$ consider the flow 
\begin{equation}
 \label{newflow}
\phi'(v) \dot v = e^v  .
\end{equation}
The essence of the proof lies in showing that, since $\phi'(v)$ varies relatively slowly on $H$, there are trajectories 
for (\ref{newflow}) in $H$
which tend to infinity in finite time, and these are mapped via $z = \phi (v)$ to trajectories of (\ref{flow1}) which tend to the
extended boundary of $\Omega$. 

For $v \in H$ the function 
$$
h(u) = \frac{\phi( v + u {\rm Re}\, v) - \phi(v)}{\phi'(v) {\rm Re}\, v } = u + \sum_{n=2}^\infty a_n u^n 
$$
is univalent for $|u| < 1$, so that Bieberbach's theorem gives  $|h''(0)| = 2 |a_2| \leq 4$ and
\begin{equation}
 \label{4a}
\left| \frac{\phi''(v)}{\phi'(v)} \right| \leq \frac4{{\rm Re}\, v} \quad \hbox{and} \quad 
\left| \log \left( \frac{\phi'(s)}{\phi'(v)} \right)  \right| \leq \frac{C_0 R}{ {\rm Re}\, v }
\quad \hbox{for} 
\quad |s - v| < R < \frac{{\rm Re}\, v }2 ,
\end{equation}
where $C_0$ is a positive absolute constant. Moreover, there exists $C_1 > 0$ with 
\begin{equation}
 \int_{[v, + \infty )} e^{-t} | \phi'(t) | \, dt 
\leq 
\frac{|\phi'(v)|}{v^4} \int_{[v, + \infty )} t^4  e^{-t}  \, dt 
\leq C_1 e^{-v} |\phi'(v)|
 \label{7}
\end{equation}
for $v \in [1, + \infty ) \subseteq \R$.
Therefore, for $w \in H$, Cauchy's theorem and (\ref{7}) lead to
\begin{eqnarray}
 \label{10a}
D &=& \int_{[1, + \infty )} e^{-t} \phi'(t) \, dt \in \C, \nonumber \\
\int_1^w e^{-t} \phi'(t) \, dt  &=& D - \psi (w) =
D - \int_w^{+ \infty}  e^{-t} \phi'(t) \, dt .
\end{eqnarray}
Here  the  integral from $1$ to $w$ is along any piecewise smooth contour in $H$, while that from $w$ to $+\infty$ is eventually along 
an  interval $[M_w, + \infty)$ with $M_w \geq 1$, and $\psi(w)$ is analytic on $H$.

Let   $N_1$ and $N_2/N_1$ be large and positive, and  for $j=1, 2$ 
let $H_j$ denote the convex domain 
\begin{equation*}
 \label{omega1def}
H_j = \left\{ x + iy: x > N_j, \, -  x^{1/2j} < y <  x^{1/2j}  \right\} \subseteq H.
\end{equation*}
Let $w$ lie in  $H_1 $, and write
\begin{equation}
 \label{11a}
x = {\rm Re} \, w, \quad s = x + \sqrt{x} . 
\end{equation}
Then (\ref{4a}), (\ref{7}) and (\ref{11a}) imply that
\begin{eqnarray}
 \label{12a}
\phi'(w) &\sim& \phi'(x) \sim \phi'(s) , \nonumber \\ 
\left| \int_{[s, + \infty )} e^{-t} \phi'(t) \, dt \right| &\leq& C_1 e^{-s} | \phi'(s)| 
= o ( | e^{-w} \phi'(x) |). 
\end{eqnarray}
Further, the integral over the  line segment from $w$ to $s$ satisfies, by (\ref{4a}) and (\ref{11a}), 
\begin{equation}
 \label{13a}
 \int_w^s e^{-t} \phi'(t) \, dt  = \phi'(x) \int_w^s e^{-t} (1 + o(1)) \, dt 
= \phi'(x) ( e^{-w} - e^{-s} + \eta (w))  ,
\end{equation}
in which parametrising with respect to $\rho = {\rm Re} \, t$ gives
$$
 |\eta (w) |  =  \left| \int_w^s e^{-t} o(1) \, dt  \right|  \leq o(1) \int_x^s e^{-\rho} \, d \rho = o(e^{-x} ) = o( |e^{-w}|) .
$$
Combining the last estimate with (\ref{4a}), (\ref{10a}), (\ref{11a}), 
(\ref{12a}) and (\ref{13a})  leads to
\begin{equation}
 \label{15a}
\psi(w) \sim e^{-w} \phi'(x) , \quad 
\lambda (w) = -\log \psi(w) 
= w + O( \log |w| ) 
\end{equation}
as $w \to \infty$ in $H_1$. 
Since $N_2/N_1$ is large, (\ref{4a}), (\ref{15a}) and Cauchy's estimate for derivatives yield $| \lambda'(w) - 1 | < 1/2$ on $H_2$, which implies that 
$\lambda (w)$ is univalent on $H_2$.
Let $N_3$ and $N_4$ be positive integers with $N_3/N_2$ and $N_4/N_3$ large. 
Then (\ref{15a}) shows that
for $j=0, \ldots, N_3$ there exists a simple path $L_j$ tending to infinity in $H_2$ and mapped by $\lambda$ onto the path
$
\{ j 2 \pi i + t: t \geq N_4 \}.
$
Thus  $\psi = e^{- \lambda } $ maps each $L_j$ injectively onto $(0, h]$, where $h = e^{-N_4} > 0$; moreover
$\psi(v) \to 0$ and $e^v \to \infty$ as $v \to \infty$ on $L_j$.

Parametrise  $L_j \subseteq H_2 \subseteq H$ by $w = v(s)$, where 
$- \psi(v(s)) = s$ for $-h \leq s < 0$.
Thus
$$
1 = - \psi'(v(s)) \frac{dv}{ds} = e^{-v(s) } \phi'(v(s)) \frac{dv}{ds}, \quad 
$$
using (\ref{10a}),
and so there exist  $N_3$ pairwise disjoint
trajectories $L_j$ in $H$ of the flow (\ref{newflow}), on which $v$ and $e^v$ tend to infinity as $s \to 0-$ and so
in finite increasing time.

Thus  the flow (\ref{newflow}) has infinitely many disjoint trajectories $L$ in $H$, on each of
which  $v(t)$ and $e^{v(t)}$ tend to infinity in finite increasing time. Because
$\phi$ is univalent, these  trajectories have disjoint images under $\phi$ in $U$. For each such trajectory $L$, write
$$
z = \phi (v), \quad 
\dot z = \phi'(v) \dot v = e^v = e^{F(z)} = f(z). 
$$
Thus $f(z(t))$ tends to infinity in finite increasing time along $\phi(L) \subseteq U$, and it remains only to show that $z(t)$ tends to the
extended boundary of $\Omega$. 
Assume that this is not the case: then there exists a sequence $(v_j) \subseteq L$ such that $e^{v_j}$ tends to infinity
but $\beta_j = \phi( v_j) \to \beta_0 \in \Omega$ as $j \to \infty$. Because $f( \beta_j ) = e^{v_j} \to \infty$, it must be the case
that $\beta_0 $ is a pole of $f$ in $\Omega$. But then there exist a large positive $M_1$ and a neighbourhood $U_1 $ of
$\beta_0$ such that the closure of
$U_1$ lies in $\Omega$ and
$f$ maps $U_1 \setminus \{ \beta_0 \}$ finite-valently onto $\{ w \in \C : M_1 < |w| < + \infty \}$. 
For large $j$ the line ${\rm Re} \, v =  {\rm Re} \, v_j $ is mapped by $z = \phi (v)$ onto a level curve $\Gamma \subseteq U $ 
on which $|f|$ is constant, and $\Gamma$ passes through $\beta_j \in U_1$ and so must lie wholly in $U_1$. 
On the other hand, by the univalence of~$\phi$, the level curve $\Gamma$ contains infinitely many  distinct points 
$\phi ( v_j + k 2 \pi i )$, $k \in \Z$, each satisfying 
$f( \phi (v_j + k 2 \pi i )) = e^{v_j} = f(\beta_j)$. 
This proves Theorem \ref{thm1}. 

\textit{Acknowledgement.}
 The author thanks the referee for 
a very careful reading of the manuscript and 
extremely helpful suggestions.

\footnotesize{
}

\end{document}